\newcommand\blfootnote[1]{%
\begingroup
\renewcommand\thefootnote{}\footnote{#1}%
\addtocounter{footnote}{-1}%
\endgroup
}
\newtheorem{theorem}{Theorem}[section]
\newtheorem{corollary}[theorem]{Corollary}
\newtheorem{lemma}[theorem]{Lemma}
\newtheorem{definition}[theorem]{Definition}
\newtheorem{problem}{Problem}
\newtheorem*{claim*}{Claim}
\newenvironment{potwr}[2][Theorem]{{\noindent\it \underline{Proof of #1 \ref{#2}}}:\\}{\qed}
\def \vol {\operatorname{vol}}
\def \domlambda {(\frac n{n+p},1) \cup (1, +\infty]}
\def \R {\mathbb R}
\def \l {\lambda}
\def \lp {\lambda'}
\def \co {\operatorname{co}}
\def \gl {\operatorname{Gl}_n(\R)}
\def \ball {\mathbb B_n}
\def \supp {\operatorname S}
\def \as {\operatorname{as}}
\newcommand{\many}[2][n]{{#2}_1, \ldots, {#2}_{#1}}
\newcommand{\mani}[1]{\many[n-1]{#1}}
\newcommand{\man}[1]{{#1}, \ldots, {#1}}
\newcommand{\manyintn}[2][n]{\int_{#2_1} \cdots \int_{{#2}_{#1}}}
\newcommand{\maniintn}[1]{\manyintn[n-1]{#1}}
\newcommand{\manyint}[1]{\int_{#1} \cdots \int_{#1}}
\newcommand{\manydx}[1][x]{d#1_1 \cdots d#1_n}
\newcommand{\manidx}[1][x]{d#1_1 \cdots d#1_{n-1}}
\def \constrsiu {b_n}
\def \constrsis {b_{n,p}}
\def \constrsif {B_{n,p,\lambda}}
\def \constrsids {\tilde b_{n,p}}
\newcommand \constrsidf[1][\alpha] {\tilde B_{n,p,#1}}
\def \constisos {a_{n,p}}
\def \constisof {A_{n,p,\lambda}}
\def \constpettyconjval {\left(\frac{\omega_{n-1}}{\omega_n}\right)^n \omega_n^2}
\def \constcentroidu {c_{n,1}}
\def \constcentroidp {c_{n,p}}
\def \constcnv { \operatorname{cnv}_{n,p} }
\def \constsob { \mathcal S_{n,p} }
\def \constlevelsets {L_{n,p,\lambda}}
\def \constzhang {c_{n,p}}
\newcommand \constmoment[1][\l] {m_{n,p,#1}}
\def \extrmoment {G} 
\def \extrsobolev {F} 
\def \constconjrsids {\bar b_{n,p}}
\newcommand \constconjrsidf[1][p] {\bar B_{n,#1}}
\def \constconjisods {\bar a_{n,p}}
\def \constconjisodf {\bar A_{n,p}}
\def \I {I_p}%
\def \N {N_p}%
\def \Nc {N_p^\circ}%
\def \V {V_p}%
\newcommand \tI[1][p] {\tilde I_{#1}}
\def \tN {\tilde N_p}
\def \tV {\tilde V_{-p}}
\def \brsi {Busemann Random Simplex Inequality\xspace}
\def \cb {convex body\xspace}
\def \sb {$C^2$-smooth boundary\xspace}
\def \pc {positive Gauss curvature\xspace}
\newcommand \cf[1][s] {continuous non-negative function#1 with compact support\xspace}
\newcommand \cfs[1][2] {$C^{#1}$ smooth, non-negative functions with compact support\xspace}
\newcommand \eq {equality holds if and only if\xspace}
\newcommand \equa[1] {equality holds if and only if $#1$ is an origin-symmetric ellipsoid\xspace}
\newcommand \equas[1] {equality holds if and only if $#1$ are homothetic origin-symmetric ellipsoids\xspace}
\newcommand \equasi[2] {equality holds in \eqref{#1} if and only if $#2$ are homothetic origin-symmetric ellipsoids\xspace}
\date{}
\title[A Convex Body Associated to the Busemann Random Simplex Inequality]{A Convex Body Associated to the Busemann Random Simplex Inequality and the Petty conjecture}
\author{J. Haddad}
\address{Juli\'an Haddad: Departamento de Matem\'atica, ICEx,  Universidade Federal de Minas Gerais, 30.123-970, Belo Horizonte, Brasil.}
\email{jhaddad@mat.ufmg.br}
\begin{document}
\blfootnote{\textup{2000} \textit{Mathematics Subject Classification}: \textup{Primary 52A22, Secondary 46E35}\\ Keywords: Convex Body, Busemann Random Simplex inequality, Petty conjecture, Sobolev inequality}


\maketitle

\begin{abstract}
	Given $L$ a \cb, the $L_p$-\brsi is closely related to the centroid body $\Gamma_p L$ for $p=1$ and $2$, and only in these cases it can be proved using the $L_p$-Busemann-Petty centroid inequality. We define a \cb $\N L$ and prove an isoperimetric inequality for $(\N L)^\circ$ that is equivalent to the $L_p$-\brsi.
	As applications, we give a simple proof of a general functional version of the \brsi and study a dual theory related to Petty's conjectured inequality. More precisely, we prove dual versions of the $L_p$-\brsi for sets and functions by means of the $p$-affine surface area measure, and we prove that the Petty conjecture is equivalent to an $L_1$-Sharp Affine Sobolev-type inequality that is stronger than (and directly implies) the Sobolev-Zhang inequality.
\end{abstract}

\section{Introduction}
Let $L \subseteq \R^n$ be a \cb (a compact convex set with non-empty interior) with the origin as interior point. In 1953 Busemann showed \cite{busemann1953volume} that if $n$ points are chosen randomly inside $L$ with uniform probability, then the expected volume of the simplex formed by the convex hull of these points and the origin attains its minimum among all convex sets of the same volume, when $L$ is an ellipsoid. 
More precisely, if we denote
\[D_n(\many{v}) = |\det(\many{v})|,\]
then $D_n(\many v)$ is the volume of the parallelepiped spanned by the vectors $\many v$.
We have \\$\frac 1{n!}D_n(\many v) = \vol(\co(0,\many v))$ and the result can be stated as
\begin{equation}
	\label{ineq_rsi_1s}
	\manyint{L} D_n(x_1, \ldots, x_{n}) \manydx \geq \constrsiu \vol(L)^{n+1}
\end{equation}
where $\constrsiu$ is a sharp constant, and \equa{L}.
This is known as the \brsi.

A \cb may be characterized by its support function $h_K$, defined as
\[
h_K(y)=\max\{\langle y, z\rangle :\ z\in K\}\, .
\]
It describes the (signed) distance of supporting hyperplanes of $K$ to the origin.
Closely related to inequality \eqref{ineq_rsi_1s}, the centroid body of $L$ is defined as the unique \cb $\Gamma_1 L$ having as support function 
\[h(\Gamma_1 L, \xi) = \frac 1{\constcentroidu \vol(L)} \int_L |\langle \xi, x \rangle| dx\]
where the constant $\constcentroidu$ is such that if $\ball$ is the unit euclidean ball, $\Gamma_1 \ball = \ball$.

It is not difficult to show (see \cite{schneider2014convex}, formulas 10.69 and 5.82) that
\[\vol(\Gamma_1 L) = \constcentroidu^{-n} \frac{2^n}{n!} \vol(L)^{-n} \manyint{L} D_n(x_1, \ldots, x_{n}) \manydx \]
and inequality \eqref{ineq_rsi_1s} is equivalent to the well known Busemann-Petty centroid inequality
\[\vol(\Gamma_1 L) \geq \vol(L),\]
where \equa{L}.

Several generalizations and extensions of inequality \eqref{ineq_rsi_1s} have been studied.
Groemer proved in \cite{groemer1973some} that the expected value, as well as the higher order moments of the volume of the convex hull of $n+1$ points inside $K$ is minimized when $K$ is an ellipsoid. Then in \cite{groemer1974mean} extended the result to the case where the number of points $k$ is allowed to be greater than or equal to $n+1$.
Pfiefer \cite{pfiefer1990maximum} extended the result to measurable sets, and were the volume is composed with an increasing function.
Hartzoulaki and Paouris \cite{hartzoulaki2003quermassintegrals} proved it replacing the volume by the Quermassintegrals.
The reverse inequality, with triangles as equality cases (in the planar case) was studied by Campi, Colesanti and Gronchi \cite{campi1999note, campi2006extremal} and Saroglou \cite{saroglou2010characterizations}.
Dann, Paouris and Pivovarov in \cite{dann2016bounding} showed functional inequalities for any number of points $k \geq 1$, and Paouris and Pivovarov in \cite{paouris2017randomized} showed a general condition allowing to apply Steiner symmetrization to many isoperimetric inequalities in stochastic form.

A general version of inequality \eqref{ineq_rsi_1s} where the $k$ bodies in the multiple integral are allowed to be different, and the volume of the parallelepiped is raised to the power $p>0$, was proved in \cite{bourgain1988geometric}. We state here a particular case.
\begin{theorem}
	\label{thm_rsi_s}
Let $\many L$ be convex bodies and $p\geq 1$, define
	\[\I(\many L) = \manyintn L D_n(\many x)^p \manydx\]
then we have
\begin{equation}
\label{ineq_rsi_s}
	\I(\many L) \geq \constrsis \prod_{i=1}^n \vol(L_i)^{\frac{n+p}n}
\end{equation}
\end{theorem}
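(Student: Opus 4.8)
The plan is to prove \eqref{ineq_rsi_s} by simultaneous Steiner symmetrization of $\many L$, the engine being that for $p\ge1$ the integrand $D_n(\many x)^p$ is the $p$-th power of the modulus of a functional multilinear in the coordinates. First I would record the following \emph{box inequality}: if $g\colon\R^n\to[0,\infty)$ is convex and even and $f_1,\dots,f_n>0$ are fixed, then
\[
F(\many a):=\int_{a_1}^{a_1+f_1}\!\cdots\!\int_{a_n}^{a_n+f_n} g(t)\,\manydx[t]
\]
is convex in $(\many a)$ (average the convexity of $g$ over the box translated by $a$) and invariant under the affine involution $a_i\mapsto-a_i-f_i$ (reflect the box through the origin and use that $g$ is even). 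A convex function invariant under an affine involution attains its minimum at the fixed point of the involution, here $(\many a)=(-f_1/2,\dots,-f_n/2)$; so among all boxes $\prod_i[a_i,a_i+f_i]$ with prescribed edge lengths, $\int g$ is smallest for the origin-centred one.

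Next I would show that $\I$ does not increase under Steiner symmetrization in any fixed direction, which we may take to be $e_n$. Write $x_i=(x_i',t_i)$ with $x_i'\in\R^{n-1}$; Laplace expansion of $\det(\many x)$ along the last row gives $\det(\many x)=\langle c,t\rangle$ with $t=(\many t)$, where $c=c(\many{x'})$ has $i$-th coordinate $(-1)^{n+i}$ times the minor of $[x_1|\cdots|x_n]$ obtained by deleting the last row and the $i$-th column — crucially $c$ is independent of $t$. Slicing each $L_i$ over its projection $P_i$ onto $e_n^\perp$ into its fibre $J_i=J_i(x_i')$, an interval of length $f_i=f_i(x_i')$, and noting that $S_{e_n}L_i$ has the same projection $P_i$ and the same fibre lengths with fibres recentred to $J_i^\ast=[-f_i/2,f_i/2]$, the box inequality with $g(t)=|\langle c,t\rangle|^p$ — convex since $|\cdot|^p$ is convex for $p\ge1$, and even — gives, for each fixed $(\many{x'})$,
\[
\int_{J_1\times\cdots\times J_n}|\langle c,t\rangle|^p\,\manydx[t]\ \geq\ \int_{J_1^\ast\times\cdots\times J_n^\ast}|\langle c,t\rangle|^p\,\manydx[t].
\]
Integrating over $P_1\times\cdots\times P_n$ yields $\I(\many L)\geq\I(S_{e_n}L_1,\dots,S_{e_n}L_n)$.

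Then I would pass to balls. Steiner symmetrization preserves volume, is non-expansive in the Hausdorff metric, and fixes balls; since some sequence of Steiner symmetrizations sends any convex body to a ball of equal volume, I can choose directions so as first to drive $L_1$ close to a ball, then $L_2$ (whose symmetrizations, by non-expansiveness, leave the near-ball $L_1$ near that ball), and so on, obtaining symmetrizations bringing every $L_i$ arbitrarily close to $r_i\ball$ simultaneously, with $r_i=(\vol(L_i)/\omega_n)^{1/n}$. Since $\I$ is continuous on $n$-tuples of convex bodies ($D_n^p$ is continuous and bounded on bounded sets, and the relevant indicators converge in $L^1$), the monotonicity step gives $\I(\many L)\geq\I(r_1\ball,\dots,r_n\ball)$. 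As $D_n(\many x)^p$ is homogeneous of degree $p$ in each $x_i$, $\I$ is homogeneous of degree $n+p$ in each argument, so
\[
\I(r_1\ball,\dots,r_n\ball)=\Big(\prod_{i=1}^n r_i^{\,n+p}\Big)\,\I(\ball,\dots,\ball)=\constrsis\prod_{i=1}^n\vol(L_i)^{\frac{n+p}n},\qquad \constrsis=\frac{\I(\ball,\dots,\ball)}{\omega_n^{\,n+p}},
\]
which is \eqref{ineq_rsi_s} with the sharp constant, balls (hence, by affine invariance, ellipsoids) being equality cases; the full equality characterization would follow from the rigidity of Steiner symmetrization.

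The hard part is really the box inequality: it is the only step that uses convexity of $t\mapsto|\langle c,t\rangle|^p$, hence $p\ge1$, and it is what converts ``symmetrizing the fibres'' into a decrease of the integrand's contribution that holds pointwise in $(\many{x'})$. The other point needing care is that a \emph{single} sequence of symmetrizations must work for all $n$ bodies at once; this is handled by the sequential scheme above, which leans on the standard facts that a suitable Steiner-symmetrization sequence drives a given convex body to a ball and that $S_u$ is non-expansive.
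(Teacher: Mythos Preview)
The paper does not prove Theorem~\ref{thm_rsi_s}: it quotes the inequality from \cite{bourgain1988geometric}, remarks that ``the proofs of these theorems are all based on some form of Steiner symmetrization,'' and thereafter uses it (via the equivalence with Theorem~\ref{thm_iso_s} shown in the introduction) as the base case $k=0$ of the induction that proves Theorem~\ref{thm_iso_f}. Your proposal is precisely that classical symmetrization argument, so you are supplying the proof the paper outsources; there is no alternative proof in the paper to compare against.

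The substance of your argument is correct. The box inequality is a clean way to package the monotonicity step; the Laplace expansion $\det(\many x)=\langle c(\many{x'}),t\rangle$ is exactly what makes the fibrewise integrand convex and even in $t$ for $p\ge1$; and the homogeneity computation for balls gives the sharp constant.

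One phrasing should be tightened. Steiner symmetrization is \emph{not} $1$-Lipschitz in the Hausdorff metric in general, so ``non-expansive'' is too strong as stated. What you actually need---and what is true---is that $d_H(S_u K,\,r\ball)\le d_H(K,\,r\ball)$ for every centred ball: for convex $K$ the condition $d_H(K,r\ball)\le\varepsilon$ is equivalent to the sandwich $(r-\varepsilon)\ball\subseteq K\subseteq(r+\varepsilon)\ball$, and $S_u$ preserves inclusions and fixes centred balls, so the sandwich (hence the distance bound) persists under any further symmetrizations. With that justification your sequential scheme is rigorous; alternatively one can simply invoke a single universal sequence of directions that drives every convex body to its ball simultaneously.
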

where $\constrsis$ is such that \equas{\many L}.
We refer to \cite[Theorem 8.6.1]{schneider2008stochastic} for the explicit value of $\constrsis$.

It is important to mention that the proofs of these theorems are all based on some form of Steiner symmetrization.
We propose a different approach based on the construction of a \cb $\N (\mani L)$.
In \cite{lutwak1997blaschke,lutwak2000lp} Lutwak, Yang and Zhang defined the $L_p$ Centroid body $\Gamma_p L$ and proved an $L_p$ version of the Busemann-Petty centroid inequality
\begin{equation}
	\label{ineq_bp_p}
	\vol(\Gamma_pL) \geq \vol(L),
\end{equation}
where \equa L.
The body $\Gamma_p L$ is defined by the support function
\[h(\Gamma_pL, \xi)^p = \frac 1{\constcentroidp \vol(L)} \int_L |\langle \xi, x \rangle|^p dx\]
where $\constcentroidp$ is such that $\Gamma_p \ball = \ball$.
In general, it doesn't seem to exist any relation between inequalities \eqref{ineq_bp_p} and \eqref{ineq_rsi_s}, except for $p=1$ and $p=2$, (see \cite[p.~4]{campi2006extremal}).
The main purpose of this paper is to construct a \cb containing the information of the $L_p$-\brsi, playing the role of the centroid body for $p \geq 1$, and obtain some inequalities related to it.
The inequalities we obtain are already known to be true in some particular cases, but the method is new and very simple, no symmetrization arguments are needed.
The second objective is to study a dualization of Theorem \ref{thm_rsi_s} that is suggested by the proof of Theorem \ref{thm_iso_f} below.

We start with the (fairly obvious) definition.
\begin{definition}
	Let $\mani L$ be convex bodies, we define $\N (\mani L)$ as the \cb defined by the support function
	\[
		h(\N (\mani L), \xi)^p = \maniintn L D_n(\mani x, \xi)^p \manidx.
	\]
	We write $\N L = \N(\man L)$.
\end{definition}

The polar of a \cb $K$ is the \cb defined by 
\[K^\circ = \{x \in \R^n \ /\ h(K,x) \leq 1\}.\]
We shall prove the following
\begin{theorem}
	\label{thm_iso_s}
	Let $\mani L$ be convex bodies, then
\begin{equation}
	\label{ineq_iso_s}
	\vol(\Nc (\mani L)) \leq \constisos \prod_{i=1}^{n-1} \vol(L_i)^{-\frac{n+p}p},
\end{equation}
	where $\constisos = \left( \frac {n+p}n \constrsis\right)^{-\frac np}$ and $\Nc (\mani L)$ stands for the polar body of $\N  (\mani L)$.
\end{theorem}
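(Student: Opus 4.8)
The plan is to deduce Theorem \ref{thm_iso_s} directly from Theorem \ref{thm_rsi_s}, with no symmetrization, by exploiting the fact that $\N(\mani L)$ depends only on $L_1,\ldots,L_{n-1}$ and plugging a cleverly chosen body into the remaining slot of $\I$. The first step is the tautological identity, immediate from Fubini's theorem and the definition of $\N$: for \emph{any} \cb $K$,
\[
\I(L_1,\ldots,L_{n-1},K)=\maniintn{L}\int_K D_n(\mani x,y)^p\,dy\,\manidx=\int_K h(\N(\mani L),y)^p\,dy .
\]
The point is that the integrand on the right, $h(\N(\mani L),\cdot)^p$, does not see $K$ at all, so we are free to choose $K$ as we please; we will take $K=\Nc(\mani L)$.

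The second ingredient is the elementary computation of $\int_{M^\circ}h(M,x)^p\,dx$ for a \cb $M$ with the origin in its interior. Passing to polar coordinates and writing $\rho=\rho_{M^\circ}$ for the radial function of $M^\circ$, one has $\rho(\theta)=h(M,\theta)^{-1}$ on the unit sphere, so the radial integral produces $h(M,\theta)^p\rho(\theta)^{n+p}/(n+p)=\rho(\theta)^{n}/(n+p)$, and integrating over the sphere gives
\[
\int_{M^\circ}h(M,x)^p\,dx=\frac{1}{n+p}\int_{S^{n-1}}\rho_{M^\circ}(\theta)^{n}\,d\theta=\frac{n}{n+p}\,\vol(M^\circ).
\]

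Combining the two displays with $M=\N(\mani L)$ and $K=\Nc(\mani L)$ gives $\I(L_1,\ldots,L_{n-1},\Nc(\mani L))=\frac{n}{n+p}\vol(\Nc(\mani L))$. Now I would apply the inequality of Theorem \ref{thm_rsi_s} to the $n$ convex bodies $L_1,\ldots,L_{n-1},\Nc(\mani L)$, obtaining
\[
\frac{n}{n+p}\,\vol(\Nc(\mani L))\ \geq\ \constrsis\left(\prod_{i=1}^{n-1}\vol(L_i)^{\frac{n+p}{n}}\right)\vol(\Nc(\mani L))^{\frac{n+p}{n}} .
\]
Since $\vol(\Nc(\mani L))>0$ we may divide by it, which cancels one power and leaves $\vol(\Nc(\mani L))^{p/n}$ on the right; solving for it and raising to the power $n/p$ yields exactly \eqref{ineq_iso_s}, and a direct check shows the resulting constant $\left(\frac{n}{(n+p)\constrsis}\right)^{n/p}$ equals $\constisos=\left(\frac{n+p}{n}\constrsis\right)^{-n/p}$.

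The only point that genuinely requires care — and the one I expect to be the real obstacle — is verifying that $\N(\mani L)$ is a bona fide \cb with the origin in its interior, so that $\Nc(\mani L)$ is itself a \cb that may legitimately be fed into Theorem \ref{thm_rsi_s}, and so that the polar-coordinate computation above is valid. Writing $D_n(\mani x,y)=|\langle y,w(\mani x)\rangle|$ with $w(\mani x)\in\R^n$ the generalized cross product of $\mani x$, the function $h(\N(\mani L),\cdot)^p$ has the form $y\mapsto\int|\langle y,w\rangle|^p\,d\mu(w)$ for the finite Borel measure $\mu$ obtained by pushing Lebesgue measure on $L_1\times\cdots\times L_{n-1}$ forward under $w$. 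For $p\geq1$ its $p$-th root is positively homogeneous and, by Minkowski's integral inequality, subadditive, hence a support function; and since for each $\xi\neq0$ the polynomial $(\mani x)\mapsto D_n(\mani x,\xi)=\det[x_1|\cdots|x_{n-1}|\xi]$ is not identically zero on $L_1\times\cdots\times L_{n-1}$ (the $L_i$ having non-empty interior), we get $h(\N(\mani L),\xi)>0$ for all $\xi\neq0$, i.e. the origin lies in the interior of the origin-symmetric \cb $\N(\mani L)$. This is the same fact underlying the definition of the $L_p$-centroid body and is presumably recorded where $\N$ is introduced.
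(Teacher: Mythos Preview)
Your proof is correct and is essentially the paper's own argument: the paper packages your two displayed identities into the single formula $\I(\mani L,K)=\tfrac{n}{n+p}\,\tV(K,\Nc(\mani L))$ and then sets $K=\Nc(\mani L)$, using $\tV(K,K)=\vol(K)$ in place of your explicit polar-coordinate computation, before applying Theorem~\ref{thm_rsi_s}. Your additional paragraph checking that $\N(\mani L)$ is a genuine origin-symmetric \cb with the origin in its interior is a welcome clarification the paper leaves implicit.
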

\noindent The inequality is invariant under volume-preserving affine transformations.
Also, \equasi{ineq_iso_s}{\mani L}.
Moreover, Theorem \ref{thm_iso_s} is equivalent to Theorem \ref{thm_rsi_s}, and the only tool we use to prove this equivalence is the dual mixed volume inequality.
Let us define for two convex bodies $K,L$, the dual mixed volume as
\[\tV (K,L) = \frac{n+p}n \int_K h_{L^\circ}(x)^p dx.\]
The dual mixed volume inequality states that $\tV (K,L) \geq \vol(K)^{\frac{n+p}n} \vol(L)^{-\frac pn}$.
Then it suffices to note that
\[\I(\many L) = \frac n{n+p} \tV(L_n, \Nc (\mani L)).\]
If we know Theorem \ref{thm_iso_s} to be true we obtain
\[\I(\many L) \geq \frac n{n+p} \vol(L_n)^{\frac {n+p}n} \vol(\Nc (\mani L))^{-\frac pn} \geq \constrsis \vol(L_n)^{\frac {n+p}n} \prod_{i=1}^{n-1} \vol(L_i)^{\frac{n+p}n}.\]
Conversely, if we know Theorem \ref{thm_rsi_s} to be true, we can compute
\begin{align*}
\vol(\Nc (\mani L)) 
	&= \tV(\Nc (\mani L), \Nc (\mani L))\\
	&= \frac {n+p}n \I(\mani L, \Nc (\mani L)) \\
	&\geq \frac {n+p}n \constrsis \vol(\N ^\circ (\mani L) )^{\frac{n+p}n} \prod_{i=1}^{n-1} \vol(L_i)^{\frac{n+p}n},
\end{align*}
and obtain Theorem \ref{thm_iso_s}.

The definition of $\N (\mani L)$ fits perfectly to prove a functional version of inequality \eqref{ineq_rsi_s}.
Let $\mani l:\R^n \to \R$ be \cf, and let us define
\[\I(\many l) = \manyint{\R^n} l_1(x_1) \cdots l_n(x_n) D_n(\many x)^p \manydx.\]
Notice that we recover the previous definition of $\I$ if we take the functions $l_i$ to be indicator functions of convex bodies.
We define accordingly the set $\N (\mani l)$.
\begin{definition}
	Let $\mani l$ be \cf, we define $\N (\mani l)$ as the \cb defined by the support function
	\[
		h(\N (\mani l), \xi)^p = \manyint{\R^n} l_1(x_1) \cdots l_{n-1}(x_{n-1}) D_n(\mani x, \xi)^p \manidx.
	\]
\end{definition}
The subject of functional inequalities with geometric counterpart attracted great interest in recent years (see for example \cite{fradelizi2007some, caglar2015functional, haddad2018asymmetric}), specially affine invariant functional inequalities, that often imply an euclidean version.
For this reason we propose to study several extensions of our results to the functional setting.

	Let us define the function
	\begin{align*}
		\extrmoment_{p,\l}(s) = \left\{
			\begin{array}{cc}
				(1+|s|^p)^{1/(\l-1)}, & \l < 1\\
				(1-|s|^p)_+^{1/(\l-1)}, & \l > 1\\
				\chi_{[-1,1]}(s), & \l = + \infty.\\
			\end{array}
			\right.\,
	\end{align*}
Here $t_+ = \max\{t,0\}$ and $\chi_{[-1,1]}$ denotes the characteristic function of the set $[-1,1]$.

\begin{theorem}
	\label{thm_iso_f}
Let $\mani{l}$ be \cf, and take any $\lambda \in \domlambda$. Then,
	\begin{equation}
		\label{ineq_iso_f}
		\vol(\Nc (l_1, \ldots,l_{n-1})) \leq \constisof \prod_{i=1}^{n-1} \|l_i\|_1^{-\frac{n+p\lp}p} \|l_i\|_\l^{\lp}
	\end{equation}
	where $\constisof$ is a sharp constant
	and \eq the functions $l_i$ have the form $l_i(x) = a_i \extrmoment_{p,\l}(|b_i A.(x-x_i)|)$ with $x_0 \in \R^n$, $a_i, b_i > 0$ and $A \in \gl$.
	The explicit value of $\constisof$ is computed in the proof of Theorem.
\end{theorem}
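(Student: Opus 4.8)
The plan is to deduce the functional inequality~\eqref{ineq_iso_f} from the geometric Theorem~\ref{thm_iso_s} by decomposing each $l_i$ into level sets and then solving a one-dimensional variational problem whose minimiser is the profile $\extrmoment_{p,\l}$; the sharp constant $\constisof$ will fall out of that computation. The only place where genuine work is needed is the equality analysis.

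\emph{Reduction to the geometric inequality.} Since $h(\N(l_1,\dots,l_{n-1}),u)^p$ is, by its very definition, multilinear in $(l_1,\dots,l_{n-1})$, writing $l_i=\int_0^\infty\chi_{\{l_i>t\}}\,dt$ and using Fubini gives, after passing to the radial functions $\rho_{\Nc(l_1,\dots,l_{n-1})}(u)=h(\N(l_1,\dots,l_{n-1}),u)^{-1}$, the identity
\[
	\rho_{\Nc(l_1,\dots,l_{n-1})}(u)^{-p}=\int_0^\infty\!\!\cdots\!\int_0^\infty \rho_{\Nc(\{l_1>t_1\},\dots,\{l_{n-1}>t_{n-1}\})}(u)^{-p}\,dt_1\cdots dt_{n-1}.
\]
Raising this to the power $-n/p$, integrating over $S^{n-1}$, and applying the reverse Minkowski inequality for the exponent $-n/p$ (which is $<1$; equivalently, the dual $L_{-p}$-Brunn--Minkowski inequality for the $(-p)$-radial combination on the right-hand side) yields
\[
	\vol\big(\Nc(l_1,\dots,l_{n-1})\big)^{-p/n}\ \ge\ \int_0^\infty\!\!\cdots\!\int_0^\infty \vol\big(\Nc(\{l_1>t_1\},\dots,\{l_{n-1}>t_{n-1}\})\big)^{-p/n}\,dt.
\]
To the integrand I would apply Theorem~\ref{thm_iso_s}. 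The level sets are measurable but in general not convex, so what is really invoked is the extension of Theorem~\ref{thm_iso_s} to measurable sets; this in turn follows from the measurable-set version of the Busemann Random Simplex inequality (Theorem~\ref{thm_rsi_s}, see \cite{pfiefer1990maximum}) by exactly the computation written after Theorem~\ref{thm_iso_s} in the ``conversely'' direction, which uses only the Random Simplex inequality itself. Since $-p/n<0$, Theorem~\ref{thm_iso_s} then turns the last display into
\[
	\vol\big(\Nc(l_1,\dots,l_{n-1})\big)^{-p/n}\ \ge\ \constisos^{-p/n}\prod_{i=1}^{n-1}\int_0^\infty \vol(\{l_i>t\})^{\frac{n+p}n}\,dt.
\]

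\emph{The one-dimensional step.} Fix $i$ and set $m(t)=\vol(\{l_i>t\})$, a non-increasing function for which the layer-cake formula gives $\|l_i\|_1=\int_0^\infty m\,dt$ and $\|l_i\|_\l^{\l}=\l\int_0^\infty t^{\l-1}m\,dt$. As $s\mapsto s^{(n+p)/n}$ is strictly convex, the tangent-line inequality at the profile
\[
	m_0(t)=\Big[\tfrac n{n+p}\,(\mu+\nu t^{\l-1})_+\Big]^{n/p},
\]
with $\mu,\nu$ chosen so that $m_0$ has the same two moments $\int m_0$ and $\int t^{\l-1}m_0$ as $m$, gives $\int_0^\infty m^{(n+p)/n}\,dt\ge\int_0^\infty m_0^{(n+p)/n}\,dt$, the first-order terms cancelling precisely because of that choice of $\mu,\nu$. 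A homogeneity count (scaling $l_i\mapsto cl_i$ and $l_i\mapsto l_i(\cdot/r)$) fixes the exponents, and one recognises $m_0$ as the level-set profile of $\extrmoment_{p,\l}(|\cdot|)$; hence
\[
	\int_0^\infty \vol(\{l_i>t\})^{\frac{n+p}n}\,dt\ \ge\ \constlevelsets\,\|l_i\|_1^{\frac{n+p\lp}n}\,\|l_i\|_\l^{-\frac{p\lp}n},
\]
where $\lp=\l/(\l-1)$ and $\constlevelsets>0$ is an explicit constant (a quotient of Beta values). For $1<\l<\infty$ one takes $\nu<0$, so $m_0$ has compact support; for $\frac n{n+p}<\l<1$ one takes $\mu<0$, and the lower restriction on $\l$ is exactly what makes $\int_0^\infty m_0^{(n+p)/n}\,dt$ converge; $\l=+\infty$ is the limiting case, where $m_0$ is a multiple of an indicator and the statement reduces to Theorem~\ref{thm_iso_s}. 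Substituting the last inequality into the previous display and raising to the power $-n/p$ gives~\eqref{ineq_iso_f} together with the explicit value $\constisof=\constisos\,\constlevelsets^{-n(n-1)/p}$.

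\emph{The equality case.} Equality in~\eqref{ineq_iso_f} forces equality in each of the three inequalities above: in reverse Minkowski, which makes the bodies $\Nc(\{l_1>t_1\},\dots,\{l_{n-1}>t_{n-1}\})$ pairwise dilates of one another; in Theorem~\ref{thm_iso_s} for almost every $t$, which makes the level sets $\{l_i>t_i\}$ homothetic origin-symmetric ellipsoids; and in the tangent-line step, which by strict convexity forces $m\equiv m_0$. Taken together these say that all level sets of all the $l_i$ are dilates of one common origin-symmetric ellipsoid $E$ and that each $l_i$ has, along $E$, the radial profile of $\extrmoment_{p,\l}$ — that is, $l_i(x)=a_i\,\extrmoment_{p,\l}(|b_iAx|)$ with $a_i,b_i>0$ and $A\in\gl$ chosen so that $A^{-1}\ball=E$, the common centre being forced to the origin because the integrand defining $\I$ is not translation invariant, which recovers the asserted extremal form. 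I expect this last part to be the real obstacle: one must discard the degenerate, lower-dimensional level sets that occur for $t$ in a null set, show that the ellipsoids furnished by the two different sources coincide up to dilation (simultaneously for all $i$ and all $t$), and then reconstruct each $l_i$ from the nested family of its level sets — the inequalities themselves being the easy part.
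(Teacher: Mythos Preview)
Your proof is correct but follows a genuinely different route from the paper's. The paper argues by induction on the number $k$ of functional arguments: the base case $k=0$ is Theorem~\ref{thm_iso_s}, and for the step one sets
\[
K=\Nc\!\big(l_1,\dots,l_{k-1},\,\Nc(l_1,\dots,l_k,L_{k+1},\dots,L_{n-1}),\,L_{k+1},\dots,L_{n-1}\big),
\]
uses the commutativity of $\I$ to get $\tV(l_k,K)=\vol(\Nc(l_1,\dots,l_k,L_{k+1},\dots,L_{n-1}))$, and then applies the Lutwak--Yang--Zhang moment inequality~\eqref{ineq_moment} directly to $\tV(l_k,K)$. No layer-cake, no reverse Minkowski, no one-dimensional lemma; the constant comes out as $\constisof=\constisos\,\constmoment^{-n(n-1)/p}$ and the equality case is immediate because the equality case of~\eqref{ineq_moment} already names the profile $G_{p,\lambda}$. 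Your approach instead linearises all $n-1$ slots at once and trades~\eqref{ineq_moment} for three ingredients: reverse Minkowski in $L^{-n/p}$, the measurable-set extension of Theorem~\ref{thm_iso_s} (via Pfiefer), and a scalar moment inequality for $t\mapsto\vol(\{l_i>t\})$. Amusingly, this last scalar inequality is exactly the paper's Lemma~\ref{lem_levelsets}, which the paper proves (by H\"older rather than your tangent-line argument) but deploys only for the unrelated Theorem~\ref{thm_rsid_f}. What your route buys is a transparent reduction to the set case; what the paper's route buys is a clean equality analysis, whereas in yours --- as you rightly anticipate --- one must synchronise the equality conditions of three separate inequalities and deal with null-set issues in Pfiefer's characterisation.
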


The equivalence between Theorems \ref{thm_rsi_s} and \ref{thm_iso_s} remains valid in the functional setting
and we obtain
\begin{corollary}
	\label{cor_rsi_f}
	Let $\many l$ be \cf, and take any $\lambda \in \domlambda$. Then,
	\begin{equation}
	\label{ineq_rsi_f}
		\I(\many l) \geq \constrsif \prod_{i=1}^n \|l_i\|_1^{\frac{n+p\lp}n} \|l_i\|_\l^{- \frac{p \lp}n }
	\end{equation}
	and \eq the functions $l_i$ are extremal functions of Theorem \ref{thm_iso_f}.
	The explicit value of $\constrsif$ is computed in the proof of the Corollary.
\end{corollary}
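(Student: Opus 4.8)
The plan is to carry over to the functional setting the argument that makes Theorems~\ref{thm_rsi_s} and~\ref{thm_iso_s} equivalent. First I would record the Fubini identity
\[
  \I(\many l)\;=\;\int_{\R^n} l_n(x)\,h\big(\N(\mani l),x\big)^p\,dx,
\]
which is immediate from the two definitions of $\I$ and $\N(\cdot)$ together with $D_n(\mani x,x_n)=D_n(\many x)$. Writing $C:=\Nc(\mani l)$ and using the identity $h(\N(\mani l),x)=\|x\|_{C}$ for the Minkowski gauge of $C$, this becomes $\I(\many l)=\int_{\R^n}l_n(x)\,\|x\|_C^{p}\,dx$; note that $h(\N(\mani l),\cdot)$ is even, so $C$ is an origin-symmetric \cb.

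Next I would apply the layer-cake formula $l_n=\int_0^\infty \chi_{\{l_n>t\}}\,dt$ together with, for a.e.\ $t$, the elementary rearrangement bound
\[
  \int_{M}\|x\|_C^{p}\,dx\;\ge\;\frac{n}{n+p}\,\vol(M)^{\frac{n+p}{n}}\vol(C)^{-\frac pn},
\]
valid for every bounded measurable $M$: this is the dual mixed volume inequality, the left side being minimized among sets of prescribed volume by the sublevel sets of $x\mapsto\|x\|_C$, i.e.\ by the origin-centered dilates $rC$. Integrating in $t$ gives
\[
  \I(\many l)\;\ge\;\frac{n}{n+p}\,\vol(C)^{-\frac pn}\int_0^\infty \vol(\{l_n>t\})^{\frac{n+p}{n}}\,dt.
\]
Theorem~\ref{thm_iso_f} bounds $\vol(C)^{-p/n}=\vol(\Nc(\mani l))^{-p/n}$ from below by $\constisof^{-p/n}\prod_{i=1}^{n-1}\|l_i\|_1^{(n+p\lp)/n}\|l_i\|_\l^{-p\lp/n}$, so it only remains to establish the one-dimensional inequality
\[
  \int_0^\infty \vol(\{l_n>t\})^{\frac{n+p}{n}}\,dt\;\ge\;\kappa_{n,p,\l}\,\|l_n\|_1^{\frac{n+p\lp}{n}}\|l_n\|_\l^{-\frac{p\lp}{n}}
\]
with a sharp constant $\kappa_{n,p,\l}$; combining the three estimates then yields \eqref{ineq_rsi_f} with $\constrsif=\frac{n}{n+p}\,\kappa_{n,p,\l}\,\constisof^{-p/n}$.

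The technical heart is that last inequality. Replacing $l_n$ by its radial decreasing rearrangement changes neither side, so with $\mu(t):=\vol(\{l_n>t\})$ the problem reduces to minimizing $\int_0^\infty\mu^{(n+p)/n}\,dt$ under the linear constraints $\int_0^\infty\mu=\|l_n\|_1$ and $\l\int_0^\infty t^{\l-1}\mu(t)\,dt=\|l_n\|_\l^{\l}$ (with the evident modification $\|l_n\|_\infty=\sup\{t:\mu(t)>0\}$ when $\l=+\infty$), the assumption $\l\in\domlambda$ guaranteeing finiteness. The Euler--Lagrange equation forces the optimal $\mu$ to be of the form $\big(c_1+c_2 t^{\l-1}\big)_+^{n/p}$, which is precisely the distribution function of a radial multiple of $\extrmoment_{p,\l}$, and $\kappa_{n,p,\l}$ comes out as an explicit ratio of Gamma functions.

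Finally, for the equality case I would trace the three steps back: equality in the rearrangement bound makes each $\{l_n>t\}$ coincide (up to null sets) with an origin-centered dilate of $C$; equality in the one-dimensional bound makes the radial profile of $l_n$ a multiple of $\extrmoment_{p,\l}$; and equality in Theorem~\ref{thm_iso_f} makes $l_1,\dots,l_{n-1}$ extremal, so that $C=\Nc(\mani l)$ is an origin-symmetric ellipsoid $\{x:|Ax|\le1\}$ with $A\in\gl$. Hence $l_n(x)=a_n\,\extrmoment_{p,\l}(b_n|Ax|)$, and since $\I(\many l)$ is symmetric in its arguments the same description applies to each $l_i$ with a common matrix $A$, identifying the extremizers with those of Theorem~\ref{thm_iso_f}. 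The main obstacle is the sharp one-dimensional optimization together with the bookkeeping needed to match its extremizers to the profile $\extrmoment_{p,\l}$ and to reconcile the equality conditions (in particular the admissible affine maps and translations) with the statement of Theorem~\ref{thm_iso_f}. If the proof of that theorem already isolates the functional dual mixed volume inequality $\int l\,\|x\|_C^{p}\,dx\ge\beta_{n,p,\l}\,\|l\|_1^{(n+p\lp)/n}\|l\|_\l^{-p\lp/n}\vol(C)^{-p/n}$, then the corollary follows from one further application of it, and essentially all of the work above is already done.
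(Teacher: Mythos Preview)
Your argument is correct, and your closing remark is exactly what the paper does. The paper cites the moment inequality \eqref{ineq_moment} of Lutwak--Yang--Zhang, which is precisely the functional dual mixed volume inequality
\[
\tV(f,L)\;\ge\;\constmoment\,\|f\|_1^{(n+p\lp)/n}\|f\|_\l^{-p\lp/n}\vol(L)^{-p/n}
\]
that you anticipate at the end; combining it with the identity $\I(\many l)=\tfrac{n}{n+p}\,\tV\big(l_n,\Nc(\mani l)\big)$ and with Theorem~\ref{thm_iso_f} gives the corollary in two lines, with $\constrsif=\tfrac{n}{n+p}\,\constmoment\,\constisof^{-p/n}$.

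Your longer route---layer-cake, then the set version of the dual mixed volume inequality on each super-level set, then a sharp one-dimensional optimization for $\int_0^\infty\mu^{(n+p)/n}$ under moment constraints---amounts to reproving \eqref{ineq_moment} from scratch. This is perfectly valid; the one-dimensional step you isolate is in fact stated and proved (by a direct H\"older argument rather than Euler--Lagrange) as Lemma~\ref{lem_levelsets} later in the paper, where it is used for a different purpose. So nothing is missing in your plan, but once \eqref{ineq_moment} is available the detour through level sets is unnecessary, and the equality discussion also collapses to the known equality case of \eqref{ineq_moment} together with that of Theorem~\ref{thm_iso_f}.
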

The case $\lambda = \infty$ of Corollary \ref{cor_rsi_f} appears already in \cite{paouris2017randomized}, application number 10, Section 5.
A more general version where the number of vertices in the random simplex is $k \leq n$ appears in \cite[Corollary~4.2]{dann2016bounding}.
Throughout this paper the operator $D_n$ can be replaced by any positive function being homogeneous of degree $1$ in each variable, and any number of variables.
Considering for example $D_k(\many[k] v)$ defined as the $k$ dimensional volume of the parallelepiped spanned by the vectors $\many[k] v$, since \cite[Corollary~4.2]{dann2016bounding} proves Theorem \ref{thm_rsi_s} for $k\leq n$ points, then the equivalence between Theorem \ref{thm_rsi_s} and Corollary \ref{cor_rsi_f} applies, and we recover a general version of \cite[Corollary~4.2]{dann2016bounding} for $\lambda \in \domlambda$.

The proof of Theorem \ref{thm_iso_f} uses a moment inequality (Lemma \ref{lem_functionaldualmixedvolume}) proved in \cite{lutwak2004moment} (this is a dual mixed volume inequality for functions), and an induction argument.
Given the simplicity of the proofs of Theorem \ref{thm_iso_f} and Corollary \ref{cor_rsi_f}, we propose to study a dual random process and an associated convex set $\tilde \N (\mani{L})$.
The motivation is the following: since all the tools involved in the proofs of Theorem \ref{thm_iso_f} and Corollary \ref{cor_rsi_f} have dual versions (the dual mixed volume and the dual mixed volume inequality), we can also prove an equivalence between a volume inequality for the set $\tilde \N (\mani{L})$ and an inequality for a dual functional $\tI(\many L)$ by means of the mixed volume and the mixed volume inequality.
This way we arrive to what we believe should be the correct dualization of $\I(\many L)$.

\begin{definition}
Let $\many{L}$ be convex bodies and $\many{l}$ be \cf, define 
\begin{align*}
	\tI(\many L) &= \manyint {S^{n-1}} D_n(\many{\xi})^p dS_{p, L_1}(\xi_1) \ldots dS_{p, L_n}(\xi_n)\\
	\tI(\many l) &= \manyint{\R^n} D_n(\nabla l_1(x_1), \ldots, \nabla l_n(x_n) )^p \manydx \\
\end{align*}
where $S_{p,L}$ denotes the $L_p$ surface area measure (see Section \ref{sec_notation}).
	Define the \cb $\tN (\mani L)$ by the support function
	\[
		h(\tN(\mani L), \xi)^p = \manyint{S^{n-1}} D_n(\mani \xi, \xi)^p dS_{p, L_1}(\xi_1) \ldots dS_{p, L_{n-1}}(\xi_{n-1})
	\]
\end{definition}
The quantity $S_{p,L}(L)^{-n} \tI(\man L)$ represents the expected $p$-th power of the volume of a random parallelepiped generated by the normal vectors $n_{x_1}, \cdots, n_{x_n}$ at points $x_i$ chosen randomly in the surface of $L$, with probability measure $S_{p,L}/S_{p,L}(L)$.
The definition of $\tI(\many l)$ is based on the definition of the surface area measure of a function (see Section \ref{sec_notation}), and for $p=1$ can be interpreted in the weak sense if $\many l$ are indicator functions of convex bodies.

The inequalities that should hold with respect to $\tI$ and $\tN$ in order to dualize the proof of Theorem \ref{thm_iso_f} are summarized in Section \ref{sec_open} but they are open problems.
Let us mention the following: for $1\leq p < n$ we ask if the inequality
\begin{equation}
	\label{conjineq_rsid_s_first}
	\tI(\many L) \geq \constconjrsids \prod_{i=1}^n \vol(L_i)^{\frac{n-p}n}
\end{equation}
holds, where the sharp constant is such that \equasi{conjineq_rsid_s_first}{\many L} for $p>1$ and homothetic ellipsoids for $p=1$.
Again, the case $p=1$ is special. The projection body of a \cb $K$ is defined by the support function
\[h(\Pi K, \xi) = \frac 12 \int_{S^{n-1}} |\langle \xi, \eta \rangle| dS_K(\eta)\]
and it is easy to see that $\vol(\Pi K) = \frac 1{n!} \tI[1](\man K)$.
Then inequality \eqref{conjineq_rsid_s_first} with $p=1$ and $L_1 = \cdots = L_n = L$ is Petty's conjectured inequality, one of the major open problems in Convex Geometry.
This is that
\begin{equation}
	\label{ineq_pettyconj}
	\vol(L)^{1-n} \vol(\Pi L) \geq \constpettyconjval
\end{equation}
where \equa L.
Also, as a consequence of the Aleksandrov-Fenchel inequality (see \cite[(1.7)]{lutwak1985mixed} or \cite[(7.64)]{schneider2014convex} for the proof), we have
\[\frac 1{n!} \tI[1](\many L) = V(\many{\Pi L}) \geq \prod_{i=1}^n \vol(\Pi L_i)^{1/n}\]
so \eqref{ineq_pettyconj} is also equivalent to \eqref{conjineq_rsid_s_first} with $p=1$ and arbitrary convex bodies $\many L$.
We refer to \cite[Section 10.9]{schneider2014convex} for an overview about recent developments and main difficulties around this conjecture.
In Section \ref{sec_open} we shall prove that \eqref{conjineq_rsid_s_first} for $p=1$ (and thus the Petty conjecture) implies the sharp, affine invariant Sobolev-like inequality
\begin{equation}
	\label{ineq_sobolevish_1_first}
	\|f\|_{\frac n{n-1}} \leq C_n \left( \manyint{\R^n} D_n(\nabla f(x_1), \ldots, \nabla f(x_n)) \manydx \right)^{\frac 1n}
\end{equation}
that is stronger than (and directly implies) the sharp affine Sobolev inequality proved by Zhang \cite{zhang1999affine}.

In Section \ref{sec_rsid} we prove a weaker result that is consequence of Corollary \ref{cor_rsi_f}.
\begin{theorem}
	\label{thm_rsid_s}
	Let $\many{L}$ be convex bodies with \sb and \pc, then
	\begin{equation}
		\label{ineq_rsid_s}
		\tI(\many L) \geq \constrsids \prod_{i=1}^n \Omega_p(L_i)^{\frac {n+p}n}
	\end{equation}
	where $\constrsids = (n+p)^n \frac \constrsis {n^{n+p}}$ and $\Omega_p(L)$ denotes the $p$-affine surface area of $L$
and \equas{\many L}.
\end{theorem}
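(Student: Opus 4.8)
The plan is to deduce the inequality from the endpoint case $\lambda = +\infty$ of Corollary \ref{cor_rsi_f}, applied to functions built from the $L_p$-surface area densities of the $L_i$. First I would record two consequences of the regularity hypotheses: since each $L_i$ has \sb and \pc, the measure $S_{p,L_i}$ is absolutely continuous with respect to the spherical Lebesgue measure, with continuous and strictly positive density $g_i := h_{L_i}^{1-p} f_{L_i}$ (here $f_{L_i}$ is the curvature function); and in this class the $p$-affine surface area admits the representation $\Omega_p(L_i) = \int_{S^{n-1}} g_i(u)^{n/(n+p)}\,du$.

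Next, for each $i$ I would introduce the star body $\tilde L_i$ with radial function $\rho_{\tilde L_i}(u) = g_i(u)^{1/(n+p)}$ and set $l_i := (n+p)\,\chi_{\tilde L_i}$ (to meet the literal hypotheses of Corollary \ref{cor_rsi_f} one replaces $\chi_{\tilde L_i}$ by a continuous approximation from below and passes to the limit at the end; the positive power of $\|\cdot\|_1$ and the negative power of $\|\cdot\|_\infty$ occurring there behave monotonically along such an approximation). A one–dimensional computation gives $\int_0^\infty l_i(r u)\,r^{n+p-1}\,dr = g_i(u)$ for every $u$; substituting this into each of the $n$ inner integrals of $\I(\many l)$ after passing to polar coordinates in every variable and using $D_n(r_1\xi_1,\ldots,r_n\xi_n)^p = (r_1\cdots r_n)^p D_n(\many\xi)^p$ yields $\I(\many l) = \tI(\many L)$. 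The same polar computation gives $\|l_i\|_1 = (n+p)\vol(\tilde L_i) = \tfrac{n+p}{n}\,\Omega_p(L_i)$ and $\|l_i\|_\infty = n+p$.

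Now I would invoke Corollary \ref{cor_rsi_f} with $\lambda = +\infty$ (so $\lp = 1$). Its sharp constant in this endpoint equals $\constrsis$: the extremal function $\extrmoment_{p,\infty}$ is $\chi_{[-1,1]}$, so testing the corollary on indicators of the unit ball $\ball$, which is an equality case of Theorem \ref{thm_rsi_s}, forces the two constants to agree. Plugging in the $l_i$ above then gives
\[
\tI(\many L) = \I(\many l)\ \geq\ \constrsis \prod_{i=1}^n \|l_i\|_1^{\frac{n+p}{n}}\,\|l_i\|_\infty^{-\frac pn}
= \constrsis\left(\frac{n+p}{n^{(n+p)/n}}\right)^{n}\prod_{i=1}^n \Omega_p(L_i)^{\frac{n+p}{n}},
\]
and since $\bigl(\tfrac{n+p}{n^{(n+p)/n}}\bigr)^{n} = \tfrac{(n+p)^n}{n^{n+p}}$, the constant is exactly $\constrsids = (n+p)^n\constrsis/n^{n+p}$, which is the asserted inequality.

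For the equality case: if the $L_i$ are homothetic origin-symmetric ellipsoids, then a direct computation (using $h_{\mathcal{E}}(u) = \|Au\|$ for $\mathcal{E} = A\,\ball$ together with the standard curvature function of an ellipsoid) shows the $\tilde L_i$ are homothetic origin-symmetric ellipsoids, so the $l_i$ are exactly the $\lambda = +\infty$ extremals of Corollary \ref{cor_rsi_f} and the chain above is an equality. Conversely, equality forces each $l_i$ to be extremal there, i.e. a positive multiple of the indicator of an ellipsoid, hence each $\tilde L_i$ is an ellipsoid; feeding the identity $dS_{p,L_i}/du = \rho_{\tilde L_i}(u)^{n+p}$ into uniqueness for the $L_p$-Minkowski problem then shows each $L_i$ is an ellipsoid, and the $L_i$ are homothetic because the $\tilde L_i$ are. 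The core inequality is routine once $g_i$ and $\tilde L_i$ are set up; the steps that demand care are this equality analysis (in particular sorting out the origin-symmetry/translation bookkeeping), the identification of the $\lambda = +\infty$ sharp constant in Corollary \ref{cor_rsi_f} with $\constrsis$, and the harmless approximation of $\chi_{\tilde L_i}$ by continuous functions needed to apply that corollary.
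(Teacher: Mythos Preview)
Your proof is correct and follows essentially the same route as the paper: both introduce the star body with radial function $f_{p,L_i}^{1/(n+p)}$, rewrite $\tI(\many L)$ via polar coordinates as $(n+p)^n$ times $\I$ evaluated on (indicators of) these star bodies, and then invoke the $\lambda=\infty$ case of Corollary~\ref{cor_rsi_f} together with $\vol(\tilde L_i)=\tfrac1n\Omega_p(L_i)$. Your extra scaling factor $(n+p)$ in $l_i$ is cosmetic, and your equality discussion (via uniqueness in the $L_p$-Minkowski problem) is more explicit than the paper's, which simply cites the equality case of Theorem~\ref{thm_rsi_s}.
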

Theorem \ref{thm_rsid_s} is weaker than inequality \eqref{conjineq_rsid_s_first} and it falls short for polytopes since $\Omega_p(L_i)=0$ in that case.
We believe Theorem \ref{thm_rsid_s} can be improved in this regard.
Notice that for $p=1$ and $L_1 = \cdots = L_n = L$ we recover Petty's affine projection inequality \cite{petty1965projection}, see also \cite[Section 10.9]{schneider2014convex}.

Finally, a functional version of Theorem \ref{thm_rsid_s} reads as follows.
\begin{theorem}
	\label{thm_rsid_f}
	Let $\many l$ be non-zero, \cfs.
	Assume additionally that $\many l$ have convex level sets with \pc.
	Let $\alpha \in (\frac n{n+1},1)\cup(1,\infty)$, then
	\begin{equation}
		\label{ineq_rsid_fc}
		\tI(\many l) \geq \constrsidf \prod_{i=1}^n  \left( \Omega_p(l_i)^{\frac{n+\alpha'}{n+1}} \Omega_p(l_i^\alpha)^{-\frac 1{n+1} \frac 1{\alpha-1}} \right)^{\frac{n+p}n}
	\end{equation}
	where
	\[\Omega_p(l) = \int_{\R^n} |\det(K l(x))|^{\frac p{n+p}} dx
	\hbox{,\ and \ }
	K l(x) = \left( \begin{array}{c|c} 0 & \nabla l(x)^T\\ \hline \\ \nabla l(x) & Hl(x) \end{array} \right)\]
	and with equality if each function $l_i$ has the form $l_i(x) = a_i F(b_i |A.x|_2)_+$ where $a_i, b_i > 0$, $A \in \gl$ and $F:(0,T] \to [0,1)$ is a solution of the following ODE
	\[
		F'(t) = \left\{
		\begin{array}{cc}
			-t^{-\frac{n-1}{p-1}} (1-F(t)^{\frac{(\alpha -1) (n+1) p}{n+p}})^{\frac{n+p}{p(p-1)}} &\hbox{ if } \alpha > 1\\
			 -t^{-\frac{n-1}{p-1}} (F(t)^{\frac{(\alpha -1) (n+1) p}{n+p}}-1)^{\frac{n+p}{p(p-1)}} &\hbox{ if } \alpha < 1\\
		\end{array}
		\right.
	\]
	with $F(T) = 0$.
	For $\alpha = \infty$ we obtain
	\[\tI(\many l) \geq \constrsidf[\infty] \prod_{i=1}^n   \Omega_p(l_i)^{\frac{n+p}n} \|l_i\|_\infty^{-\frac pn}.\]
The explicit value of $\constrsidf$ is computed in the proof of the Theorem.
\end{theorem}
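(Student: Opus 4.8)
The plan is to deduce Theorem \ref{thm_rsid_f} from Corollary \ref{cor_rsi_f} by an explicit change of variables, exactly as Theorem \ref{thm_rsid_s} is deduced from the set case; the whole point is to match the three quantities $\I$, $\|\cdot\|_1$, $\|\cdot\|_\l$ in Corollary \ref{cor_rsi_f} with $\tI(\many l)$, $\Omega_p(l_i)$ and $\Omega_p(l_i^\alpha)$ respectively, under the correspondence $\l=1+(\alpha-1)\tfrac{(n+1)p}{n+p}$, which maps $(\tfrac n{n+1},1)\cup(1,\infty)$ onto $\domlambda$ with $\alpha=\infty\leftrightarrow\l=\infty$. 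First I would record the pointwise identity $|\det Kl(x)|=|\nabla l(x)|^{\,n+1}\,\kappa_l(x)$, where $\kappa_l(x)$ is the Gauss curvature at $x$ of the level set $\{l=l(x)\}$: this is a Schur-complement computation for the bordered Hessian, using that $Hl(x)$ restricted to $\nabla l(x)^\perp$ is $|\nabla l(x)|$ times the second fundamental form of that level set. Writing, for $\xi\in S^{n-1}$ and $0<t<\|l_i\|_\infty$, $\phi_i(t,\xi)=|\nabla l_i|^{\,p-1}\,\omega_i(t,\xi)$ — where $\omega_i(t,\xi)$ is the reciprocal Gauss curvature of $\{l_i\ge t\}$ at its boundary point with outer normal $\xi$ and $|\nabla l_i|$ is evaluated there — the coarea formula applied to each scalar function $l_i$, together with the Gauss-map parametrisation of the (by hypothesis $C^2$, positively curved) level sets, turns $\tI(\many l)$ into $\manyint{S^{n-1}}D_n(\xi_1,\dots,\xi_n)^p\prod_i\bigl(\int_0^{\|l_i\|_\infty}\phi_i(t,\xi_i)\,dt\bigr)d\xi_1\cdots d\xi_n$; the same identity and coarea, applied to $l_i$ and to $l_i^\alpha$ (which share level sets, with $|\nabla l_i^\alpha|=\alpha l_i^{\alpha-1}|\nabla l_i|$), give $\Omega_p(l_i)=\int_0^{\|l_i\|_\infty}\int_{S^{n-1}}\phi_i(t,\xi)^{q}\,d\xi\,dt$ and $\Omega_p(l_i^\alpha)=\alpha^{(n+1)p/(n+p)}\int_0^{\|l_i\|_\infty}t^{\l-1}\int_{S^{n-1}}\phi_i(t,\xi)^{q}\,d\xi\,dt$, with $q=\tfrac n{n+p}$.

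Next I would define, for each $i$, the function $m_i\colon\R^n\to[0,\infty)$ by $m_i(s\xi)=(n+p)\,\bigl|\{t\in(0,\|l_i\|_\infty):\phi_i(t,\xi)>s^{n+p}\}\bigr|$ — the functional analogue of the star body used to prove Theorem \ref{thm_rsid_s} — which is non-negative, compactly supported, and continuous up to a routine approximation. The substitution $u=s^{n+p}$ and the layer-cake formula give $\int_0^\infty m_i(s\xi)\,s^{n+p-1}\,ds=\int_0^{\|l_i\|_\infty}\phi_i(t,\xi)\,dt$, hence $\tI(\many l)=\I(m_1,\dots,m_n)$, and likewise $\|m_i\|_1=\tfrac{n+p}{n}\Omega_p(l_i)$ exactly; a second layer-cake computation yields $\|m_i\|_\l^\l=\tfrac{(n+p)^\l\l}{n}\int_0^{\|l_i\|_\infty}t^{\l-1}\int_{S^{n-1}}\phi_i^{*}(t,\xi)^{q}\,d\xi\,dt$, where $\phi_i^{*}(\cdot,\xi)$ is the decreasing rearrangement of $\phi_i(\cdot,\xi)$. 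Since $t\mapsto t^{\l-1}$ is monotone, the Hardy--Littlewood inequality relates this to the formula for $\Omega_p(l_i^\alpha)$: for $\l>1$ it gives $\|m_i\|_\l\le c_{n,p,\alpha}\,\Omega_p(l_i^\alpha)^{1/\l}$ while the exponent of $\|m_i\|_\l$ in Corollary \ref{cor_rsi_f} is negative, and for $\l<1$ both the inequality and the sign of that exponent reverse, so in either case the estimate points in the direction needed for a lower bound on $\tI$.

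Applying Corollary \ref{cor_rsi_f} to $m_1,\dots,m_n$ then gives $\tI(\many l)=\I(m_1,\dots,m_n)\ge\constrsif\prod_i\|m_i\|_1^{(n+p\lp)/n}\|m_i\|_\l^{-p\lp/n}$, and substituting the identifications above — together with the elementary identities $\tfrac{n+p\lp}{n}=\tfrac{n+\alpha'}{n+1}\cdot\tfrac{n+p}{n}$ and $\tfrac{p\lp}{n\l}=\tfrac1{(n+1)(\alpha-1)}\cdot\tfrac{n+p}{n}$, valid for the chosen $\l$ — yields \eqref{ineq_rsid_fc} with $\constrsidf$ an explicit product of $\constrsif$ and elementary factors of $n,p,\alpha$. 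For $\alpha=\infty$ one has $\l=\infty$, $\lp=1$ and $\|m_i\|_\infty=(n+p)\|l_i\|_\infty$, which gives the displayed bound with $\constrsidf[\infty]$. Equality in Corollary \ref{cor_rsi_f} forces each $m_i$ to equal $a_i\,\extrmoment_{p,\l}(|b_iAx|)$, and equality in the Hardy--Littlewood step forces $\phi_i(\cdot,\xi)$ to be monotone; inverting the change of variables (a routine back-substitution) turns the profile $\extrmoment_{p,\l}$ into a solution $F$ of the stated ODE — the relation between the radius on the $m_i$-side and the radius on the $l_i$-side is $d^{\,p}r^{\,p}=1\mp F^{\l-1}$, whose differentiation produces exactly the exponents $-\tfrac{n-1}{p-1}$ and $\tfrac{n+p}{p(p-1)}$ and the power $\l-1=\tfrac{(\alpha-1)(n+1)p}{n+p}$ of $F$ — so the extremals are $l_i(x)=a_iF(b_i|Ax|)_+$.

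The main difficulty is making these manipulations rigorous under only the stated regularity. The positive-curvature hypothesis is exactly what turns the Gauss map of each level set $\{l_i\ge t\}$ into a diffeomorphism onto $S^{n-1}$, so that $\phi_i$ is well defined and $\tI(\many l)$ really splits as the displayed spherical integral; and the distribution-function definition of $m_i$ combined with the Hardy--Littlewood comparison is precisely what removes any need to assume $t\mapsto\phi_i(t,\xi)$ monotone for a general $l_i$ in the class (the price being that such an $l_i$ need not be extremal). What remains is the bookkeeping of $\constrsidf$ and the finiteness of $\Omega_p(l_i^\alpha)$, for which the lower restriction $\alpha>\tfrac n{n+1}$ is exactly what is needed.
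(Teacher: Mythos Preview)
Your argument is correct and reaches the same conclusion, but it is a genuinely different route from the paper's. The paper proceeds by passing to the \emph{star bodies} $(l_i)^*_p$ with radial function $f_{p,l_i}^{1/(n+p)}$, applies the \emph{set} version of the Random Simplex inequality (Theorem~\ref{thm_rsi_s} for measurable sets) to obtain $\tI(\many l)\ge c\prod_i\vol((l_i)^*_p)^{(n+p)/n}$, and then estimates each $\vol((l_i)^*_p)=\int_{S^{n-1}}\bigl(\int_t\phi_i\bigr)^{n/(n+p)}d\xi$ from below in two further steps: a reverse Minkowski integral inequality (exponent $n/(n+p)<1$) to pass to $\int_t\Omega_p(l_i,t)^{(n+p)/n}dt$, followed by a one--dimensional moment lemma (Lemma~\ref{lem_levelsets}) applied to $g(t)=\Omega_p(l_i,t)$.

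You instead build \emph{functions} $m_i$ on $\R^n$ from the distribution of $\phi_i(\cdot,\xi)$, verify the exact identity $\tI(\many l)=\I(m_1,\dots,m_n)$, and invoke the \emph{functional} Corollary~\ref{cor_rsi_f} directly; the $\|m_i\|_1$ term matches $\Omega_p(l_i)$ exactly, and the only estimation left is the Hardy--Littlewood comparison for $\|m_i\|_\l$. The net effect is that the moment inequality hidden inside Corollary~\ref{cor_rsi_f} absorbs what the paper does with Minkowski $+$ Lemma~\ref{lem_levelsets}. Your route is more streamlined and avoids proving an auxiliary one--variable lemma; the paper's route, on the other hand, uses only the set Random Simplex inequality as a black box and keeps all the functional work explicit. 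Both produce the same sharp constant and the same sufficient equality condition, since for the radial extremals $\phi_i(t,\xi)$ is independent of $\xi$ (so the paper's Minkowski step is tight) and monotone in $t$ (so your Hardy--Littlewood step is tight). The only point that deserves a line of care in a full write-up is the continuity of $m_i$ required by Corollary~\ref{cor_rsi_f}; this is indeed routine by approximation, as you note.
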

The quantity $\Omega_p(l)$ at the right-hand side can be interpreted as a functional version of the $p$-affine surface area measure, although other definitions can be found in the literature that are slightly different.
For example, in formula (26) of \cite{caglar2015functional}, it is defined for a smooth convex function $\psi$ as
\[ \as_\lambda(\psi) = \int_{\R^n} \det(H \psi(x))^\lambda e^{-\psi(x)} d x. \]
It can be shown that if $l(x) = F(\|x\|_K)$ with $F:[0,\infty) \to \R_+$ a $C^2$ smooth (not necessarily decreasing) function with compact support, then $\Omega_p(l) = \Omega_p(K)$
provided that $\int_0^\infty t^{\frac{n(n-1)}{n+p}} |F'(t)|^{\frac {p(n+1)}{n+p}} dt = 1$.
We remark that in Theorem 3 of \cite{caglar2015functional}, a similar relation for $\as_\lambda(\psi)$ is proved only for $F(t) = \frac{t^2}2$.
For the case $p=1$ and any $l(x)$ with convex level sets, our definition also satisfies
\[
	\Omega_1(l) = \int_0^\infty \Omega_1(\{x \in \R^n/\ l(x) \geq t \} ) dt.
\]
If $A \in \gl$ and $l_A(x) = l(A.x)$ then 
\[K l_A(x) = \left(\begin{array}{c|c} 1 & 0\\ \hline \\0 & A^T \end{array}\right). K l(x) . \left(\begin{array}{c|c} 1 & 0\\ \hline \\0 & A \end{array}\right)\]
so both sides of inequality \eqref{ineq_rsid_fc} are invariant under volume preserving affine transformations.

The rest of the paper is organized as follows. In Section \ref{sec_notation} we fix some notations and present some background in convex geometry to be used throughout the paper.
In Section 3 we define the set $\N $ and prove Theorem \ref{thm_iso_f} and Corollary \ref{cor_rsi_f}.
In Section 4 we study the dual \brsi and prove Theorems \ref{thm_rsid_s} and \ref{thm_rsid_f}.
Finally in Section 5 we discuss the conjectured inequality \eqref{conjineq_rsid_s_first}, and its relation to the Petty conjecture.

This work was partly written during a short visit at TU Wien.
We would like to thank professor F. Schuster and his team of collaborators for their hospitality, useful comments and fruitful discussions.

The author was partially supported by Fapemig, Project APQ-01542-18.
                                   
\section{Notation and Preliminaries}
\label{sec_notation}
This section is devoted to basic definitions and notations within the convex geometry. For a comprehensive reference in convex geometry we refer to the book \cite{schneider2014convex}.

We recall that a \cb $K\subset\R^n$ is a convex compact subset of $\R^n$ with non-empty interior.

The support function $h_K$ is defined as
\[
h_K(y)=\max\{\langle y, z\rangle /\ z\in K\}, 
\]
and uniquely characterizes $K$. If $K$ contains the origin in the interior, then we also have the gauge $\|\cdot\|_K$ and radial $r_K(\cdot)$ functions of $K$ defined respectively as
\[
\|y\|_K:=\inf\{\lambda>0 :\  y\in \lambda K\}\, ,\quad y\in\R^n\setminus\{0\}\, ,
\]
\[
r_K(y):=\max\{\lambda>0 :\ \lambda y\in K\}\, ,\quad y\in\R^n\setminus\{0\}\, .
\]

Clearly, $\|y\|_K=\frac{1}{r_K(y)}$. We also recall that $\|\cdot\|_K$ is actually a norm when the \cb $K$ is centrally symmetric, i.e. $K=-K$, and the unit ball with respect to $\|\cdot\|_K$ is just $K$.
On the other hand, a general norm on $\R^n$ is uniquely determined by its unit ball, which is a centrally symmetric \cb.

For a \cb $K\subset \R^n$ containing the origin in its interior we define the polar body, denoted by $K^\circ$, by
\[
K^\circ:=\{y\in\R^n /\ \langle y,z \rangle\leq 1\quad \forall z\in K\}\, .
\]
Evidently, $h_K^{-1} = r_{K^{\circ}}$.

For $p \geq 1$, the $L_p$-mixed volume $\V(K,L)$ of convex bodies $K$ and $L$ is defined by
\begin{equation}
	\V(K,  L) = \frac{p}{n}\lim_{\varepsilon \rightarrow 0}\frac{\vol(K +_p \varepsilon\cdot_p L) - \vol(K)}{\varepsilon},
\end{equation}
where $K +_p \varepsilon \cdot_pL$ is the \cb defined by:
\begin{equation}
	h_{K +_p \varepsilon \cdot_p L}(x)^p = h_K(x)^p + \varepsilon h_L(x)^p, \quad \forall x \in \mathbb{R}^n.
\end{equation}

It is known (see \cite{LEBMFtheory}) that there exists a unique finite positive Borel measure $S_p(K, .)$ on $\mathbb{S}^{n-1}$ such that
\begin{equation}
\V(K, L) = \frac{1}{n}\int_{\mathbb{S}^{n-1}}h_L(u)^pdS_p(K, u),
\end{equation}
for each \cb $L$.

If $1 \leq p < \infty$ and $K, L$ are convex bodies in $\mathbb{R}^n$ containing the origin as interior point, we can find also in \cite{LEBMFtheory} that
\begin{equation}\label{ebvm}
\V(K, L) \geq \vol(K)^{\frac{n-p}{n}}\vol(L)^{\frac{p}{n}},
\end{equation}	 
with equality if and only if $K$ and $L$ are dilates of each other.  

The $L^p$ surface area measure of a function $f: \mathbb{R}^n \rightarrow \mathbb{R}$ with $L^p$ weak derivative is given by the lemma:

\begin{lemma}[Lemma 4.1 of \cite{LYZoptsobnorms}]\label{lem_functionaldualmixedvolume}
	Given $1 \leq p < \infty$ and a function $f: \mathbb{R}^n \rightarrow \mathbb{R}$ with $L^p$ weak derivative, there exists a unique finite Borel measure $S_p(f,.)$ on $\mathbb{S}^{n-1}$ such that
	\begin{equation}
		\label{def_Srf}
		\int_{\mathbb{R}^n}\phi(-\nabla f(x))^p dx = \int_{\mathbb{S}^{n-1}}\phi(u)^pdS_p(f,u),
	\end{equation}
for every non-negative continuous function $\phi : \mathbb{R}^n \rightarrow \mathbb{R}$ homogeneous of degree $1$.
\end{lemma}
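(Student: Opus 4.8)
\emph{Proof proposal.} The plan is to construct $S_p(f,\cdot)$ explicitly as the image of a weighted Lebesgue measure under the Gauss-type map $x \mapsto -\nabla f(x)/|\nabla f(x)|$, and then to obtain uniqueness from the Riesz representation theorem. Since $f$ has $L^p$ weak derivative, its weak gradient is an element of $L^p(\mathbb{R}^n,\mathbb{R}^n)$; fix a Borel representative of it, still denoted $\nabla f$, which is then defined everywhere and satisfies $\int_{\mathbb{R}^n}|\nabla f(x)|^p\,dx<\infty$. First I would set $N=\{x\in\mathbb{R}^n:\ \nabla f(x)\neq 0\}$ and define the Borel map $T\colon N\to \mathbb{S}^{n-1}$ by $T(x)=-\nabla f(x)/|\nabla f(x)|$, together with the finite Borel measure $d\nu(x)=|\nabla f(x)|^p\,dx$ on $N$. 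I would then \emph{define} $S_p(f,\cdot):=\nu\circ T^{-1}$, i.e.\ $S_p(f,B)=\nu\big(T^{-1}(B)\big)$ for Borel $B\subseteq\mathbb{S}^{n-1}$; this is a finite Borel measure on the sphere.

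Next I would verify \eqref{def_Srf}. Let $\phi\colon\mathbb{R}^n\to\mathbb{R}$ be non-negative, continuous and homogeneous of degree $1$. Positive homogeneity forces $\phi(0)=0$ and $\phi(-\nabla f(x))=|\nabla f(x)|\,\phi(T(x))$ for $x\in N$; hence the integrand $\phi(-\nabla f(x))^p$ vanishes on $\mathbb{R}^n\setminus N$ (where $\nabla f=0$), and
\[\int_{\mathbb{R}^n}\phi(-\nabla f(x))^p\,dx=\int_N \phi\big(T(x)\big)^p\,|\nabla f(x)|^p\,dx=\int_N \phi\big(T(x)\big)^p\,d\nu(x)=\int_{\mathbb{S}^{n-1}}\phi(u)^p\,dS_p(f,u),\]
the last step being the change-of-variables formula for the pushforward $\nu\circ T^{-1}$ applied to the non-negative Borel function $u\mapsto\phi(u)^p$.

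For uniqueness, suppose $\mu_1,\mu_2$ are finite Borel measures on $\mathbb{S}^{n-1}$ both satisfying \eqref{def_Srf}. Every non-negative $g\in C(\mathbb{S}^{n-1})$ is the restriction to the sphere of the non-negative, continuous, $1$-homogeneous function $\phi(x)=|x|\,g(x/|x|)$ (with $\phi(0)=0$), and conversely each such $\phi$ restricts to a non-negative element of $C(\mathbb{S}^{n-1})$; moreover $g\mapsto g^p$ maps the non-negative cone of $C(\mathbb{S}^{n-1})$ onto itself. Therefore $\int g\,d\mu_1=\int g\,d\mu_2$ for all non-negative $g\in C(\mathbb{S}^{n-1})$, hence (writing $g=g_+-g_-$) for all $g\in C(\mathbb{S}^{n-1})$, and the Riesz representation theorem gives $\mu_1=\mu_2$. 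I do not expect any real obstacle here: the only delicate points are measure-theoretic bookkeeping, namely fixing a genuine a.e.-defined measurable representative of the weak gradient so that $T$ and $\nu$ are well defined, and checking that the set $\{\nabla f=0\}$ contributes zero to both sides, which is immediate from $\phi(0)=0$. \qed
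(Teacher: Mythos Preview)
Your argument is correct: defining $S_p(f,\cdot)$ as the pushforward of the finite measure $|\nabla f(x)|^p\,dx$ under $x\mapsto -\nabla f(x)/|\nabla f(x)|$, verifying \eqref{def_Srf} via $1$-homogeneity, and deducing uniqueness from Riesz is exactly the standard route. Note, however, that the paper does not supply its own proof of this lemma; it merely quotes it as Lemma~4.1 of \cite{LYZoptsobnorms}, so there is no in-paper argument to compare against. Your write-up is in fact the natural proof one would expect to find in \cite{LYZoptsobnorms}.
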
 
Conversely, for a \cb $L$ the function $f_L(x) = F(\|x\|_L)$ satisfies $S_p(f,.) = S_p(L,.)$ if $F$ is any function $F:\R_+ \to \R_+$ satisfying 
\[\int_0^\infty t^{n-1}F'(t)^p dt = 1\] (see \cite[Formula (4.13) ]{LYZoptsobnorms}). 
In view of identity \eqref{def_Srf},
for any $f$ and $L$ such that $S_p(f,.) = S_p(L,.)$, we have
\[\V(L, K) = \frac 1n \int_{\mathbb{R}^n} h(K, -\nabla f(x))^p dx.\]
This allows us to define the functional mixed volume. For $f$ a \cf[] we define
\begin{align*}
	\V(f, K) 
	&= \frac 1n \int_{S^{n-1}} h(K, \xi)^p dS_p(f, \xi)\\
	&= \frac 1n \int_{\mathbb{R}^n} h(K, -\nabla f(x))^p dx.\\
\end{align*}
The $L_p$ Sharp Sobolev inequality for general norms was proved by Cordero, Nazaret and Villani in \cite{cordero2004mass} using a mass-transportation method, and it extends the mixed volume inequality to the functional setting.
If $1\leq p < n$, $L$ is a centrally symmetric \cb and $f$ is a \cf[], we have
\begin{equation}
	\label{ineq_mixedvolume_f}
	\V(f,L) = \frac 1n \int_{\R^n} \|\nabla f(x)\|_{L^\circ}^p dx \geq \constcnv \|f\|_{p^*}^p \vol(L)^{p/n}
\end{equation}
where \eq $f(x) = a F_p(b \|x-x_0\|_L)$ with $x_0 \in \R^n$, $a,b > 0$ and
\[F_p(t) =\left\{ \begin{array}{cc} (1+t^{\frac p{p-1}} )^{1-\frac np} \hbox{ if } p \in (1,n)\\ \chi_{[0,1]}(t) \hbox{ if } p = 1.\end{array}\right. \]
For $p=1$ the equality is understood in the weak sense.
For $L=B_2$ we recover the euclidean $L_p$-Sobolev inequality
\[\constsob \| |\nabla f(x)|_2 \|_{L^p} \geq  \|f\|_{p^*}\]
with constant $\constsob = (n \constcnv \omega_n^{\frac pn})^{-1/p}$ where $\omega_n$ is the volume of the unit euclidean ball.
We refer to \cite[equation~(5)]{haddad2016sharp} for the precise value of $\constsob$.

\begin{definition}
	For a \cb $K$ and $1\leq p < n$ we define the function $\extrsobolev_{p,K}(x) = a F_p(\|x\|_K)$ where $a$ is defined by the requirement that
	\[\V(\extrsobolev_{p,K}, L) = \V(K,L)\]
	for every \cb $L$.
\end{definition}
Inserting $\extrsobolev_{p,L}$ in \eqref{ineq_mixedvolume_f} and using the equality case, we have 
\begin{equation}
	\constcnv \|\extrsobolev_{p,K}\|_{p^*}^p = \vol(K)^{\frac{n-p}n}.
\end{equation}

For $p\geq 1$, the $L_p$-dual mixed volume $\tV(K,L)$ of two bodies $K,L$ is defined by
\begin{equation}
	\label{def_dualmixedvolume_s}
	\tV(K,L) = \frac{n+p}{n} \int_K \|x\|_L^p dx
\end{equation}
and the $L_p$-dual mixed volume inequality states that
\[\tV(K,L) \geq \vol(K)^{\frac{n+p}n} \vol(L)^{-\frac pn}.\]

Definition \eqref{def_dualmixedvolume_s} extends easily to functions (see \cite[equation (2.6)]{lutwak2004moment}) as
\[\tV(f,L) = \frac{n+p}{n} \int_{\R^n} f(x) \|x\|_L^p dx.\]
For a \cb $K$, the function $f_K(x) = F(\|x\|_K)$ satisfies $\tV(f, L) = \tV(K, L)$ for every \cb $L$, provided that the function $F$ satisfies
\[(n+p) \int_0^\infty t^{n+p-1} F(t) dt = 1.\]
The next functional inequality is an extension to functions of the $L_p$-dual mixed volume inequality.
\begin{lemma}[{\cite[Lemma 4.1]{lutwak2004moment}}]
Let $\l \in \domlambda$ and $L$ be a \cb with the origin in its interior. Then, for any non-zero non-negative continuous function $f:\R^n \to \R$ with compact support,

\begin{equation}
	\label{ineq_moment}
	\tV(f, L) \geq \constmoment \|f\|_1^{\frac{n+p \l '}n} \|f\|_\l^{- \frac pn \l'} \vol(L)^{-\frac pn}.
\end{equation}
	We refer to \cite[Lemma 4.1]{lutwak2004moment} for the explicit value of the sharp constant $\constmoment$.
Moreover, \eq $f(x) = a \extrmoment_{p,\l}(b \|x\|_L)$ for constants $a, b \in \R$.
\end{lemma}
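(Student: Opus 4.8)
The plan is to compare $f$ directly with the conjectured extremal profile, using only the pointwise convexity of $t\mapsto t^\l$; no symmetrization is needed. Fix an admissible $f$ and set $g_0(x)=a_0\,\extrmoment_{p,\l}(b_0\|x\|_L)$, choosing $a_0,b_0>0$ so that $\|g_0\|_1=\|f\|_1$ and $\|g_0\|_\l=\|f\|_\l$ (when $\l=+\infty$ the second requirement reads $\|g_0\|_\infty=\|f\|_\infty$). Writing $\|g_0\|_1$ and $\|g_0\|_\l^\l$ in polar coordinates adapted to $L$ gives $\|g_0\|_1=a_0\vol(L)b_0^{-n}c_1$ and $\|g_0\|_\l^\l=a_0^\l\vol(L)b_0^{-n}c_\l$, where $c_1$ and $c_\l$ are the $(n-1)$-moments of $\extrmoment_{p,\l}$ and of $\extrmoment_{p,\l}^{\,\l}$; the ratio of these two identities determines $a_0$ (here one uses $\l\neq1$), and then the first determines $b_0$. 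This step is exactly where the hypothesis $\l\in\domlambda$ is used: for $\l\le n/(n+p)$ the integral $c_\l$ diverges and there is no matching $g_0$. Note that $g_0$ is a nonincreasing function of $\|x\|_L$, supported on a dilate of $L$ when $\l>1$ or $\l=+\infty$, and positive on all of $\R^n$ when $\l<1$.

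Next I would invoke the tangent-line inequality for $t\mapsto t^\l$ on $[0,\infty)$ at the point $g_0(x)$, evaluated at $f(x)\ge 0$: if $\l>1$ then $f^\l\ge g_0^\l+\l\,g_0^{\l-1}(f-g_0)$ pointwise, and the inequality reverses if $\l<1$. Integrating over $\R^n$ and cancelling $\int_{\R^n}f^\l=\int_{\R^n}g_0^\l$ gives $\int_{\R^n} g_0^{\l-1}(f-g_0)\,dx\le 0$ when $\l>1$ and $\ge 0$ when $\l<1$. By construction $g_0^{\l-1}$ is an affine function of $\|x\|_L^p$: it equals $a_0^{\l-1}\bigl(1-(b_0\|x\|_L)^p\bigr)$ on $\{g_0>0\}$ if $\l>1$, and $a_0^{\l-1}\bigl(1+(b_0\|x\|_L)^p\bigr)$ everywhere if $\l<1$. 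In the case $\l>1$ one extends the integral to all of $\R^n$ at no cost, since off $\{g_0>0\}$ one has $f-g_0=f\ge 0$ while $1-(b_0\|x\|_L)^p\le 0$, so the integrand is already nonpositive there. Either way, expanding $1\mp b_0^p\|x\|_L^p$ and using $\int_{\R^n}(f-g_0)=0$, the content of the resulting sign condition is precisely
\[
\int_{\R^n}\|x\|_L^p\,\bigl(f(x)-g_0(x)\bigr)\,dx\ \ge\ 0,\qquad\text{equivalently}\qquad \tV(f,L)\ \ge\ \tV(g_0,L).
\]
The case $\l=+\infty$ is the bathtub principle argued by the same sign comparison: $g_0=\|f\|_\infty\,\chi_{rL}$ with $\vol(rL)=\|f\|_1/\|f\|_\infty$, and $f-g_0\le 0$ on $rL$ (where $\|x\|_L^p\le r^p$) while $f-g_0\ge 0$ outside it.

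Finally, since $g_0$ itself has the asserted extremal shape, $\tV(g_0,L)$ equals, by the joint homogeneity of both sides of \eqref{ineq_moment} in $f$ and under dilations $f\mapsto f(\cdot/s)$, the expression $\constmoment\|g_0\|_1^{(n+p\lp)/n}\|g_0\|_\l^{-p\lp/n}\vol(L)^{-p/n}$, where $\constmoment$ is the explicit constant built from $c_1$, $c_\l$ and the $(n+p-1)$-moment of $\extrmoment_{p,\l}$ (its value is recorded in \cite{lutwak2004moment}). Substituting $\|g_0\|_1=\|f\|_1$ and $\|g_0\|_\l=\|f\|_\l$ yields \eqref{ineq_moment}. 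For the equality case, strict convexity (respectively strict concavity) of $t\mapsto t^\l$ forces $f=g_0$ almost everywhere, hence everywhere by continuity, so $f(x)=a_0\,\extrmoment_{p,\l}(b_0\|x\|_L)$; conversely such an $f$ gives equality at every step.

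I expect the one genuine idea to be the guess of $g_0$, which is forced by the stationarity (Euler--Lagrange) relation $\|x\|_L^p=\mu+\nu\,g_0^{\l-1}$ on $\{g_0>0\}$ with the Lagrange multiplier $\nu$ of the correct sign; everything after that is a one-line convexity estimate. The merely technical points are the integrability bookkeeping that isolates the admissible range $\domlambda$, the exponent arithmetic that produces the homogeneous constant $\constmoment$, and a little care with $\{g_0>0\}$ when $\l>1$. Note also that for $\l\in(n/(n+p),1)$ the extremal $g_0$ fails to be compactly supported, so among the compactly supported competitors in the statement the bound is approached but not attained.
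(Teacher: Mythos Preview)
The paper does not supply its own proof of this lemma; it is quoted verbatim from \cite{lutwak2004moment} and used as a black box. So there is no in-paper argument to compare against.

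That said, your proof is correct and is essentially the argument given in the cited reference. The key steps---matching the two norms by scaling the profile $\extrmoment_{p,\l}$, applying the tangent-line inequality for $t\mapsto t^\l$, and exploiting that $\extrmoment_{p,\l}^{\,\l-1}$ is affine in $\|x\|_L^p$---are exactly the Lutwak--Yang--Zhang mechanism. Your handling of the support issue when $\l>1$ (extending the integral from $\{g_0>0\}$ to $\R^n$ by a sign check) and of the integrability bookkeeping when $\l\in(n/(n+p),1)$ are both sound. The remark that for $\l<1$ the extremal $g_0$ is not compactly supported, so the infimum is not attained within the stated hypothesis class, is also accurate; the lemma as phrased is slightly loose on this point.
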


\begin{definition} For a \cb $K$ we define the function $\extrmoment_{p,\l, K}(x) = a \extrmoment_{p,\l}(\|x\|_K)$
where $a$ is defined by the requirement that $\tV(\extrmoment_{p,\l, K},L) = \tV(K, L)$ for every \cb $L$.
\end{definition}
Inserting $\extrmoment_{p,\l,L}$ in \eqref{ineq_mixedvolume_f} and using the equality case, we obtain
\[\constmoment \|\extrmoment_{p,\l,L}\|_1^{\frac{n+p \l '}n} \|\extrmoment_{p,\l,L}\|_\l^{- \frac pn \l'} = \vol(L)^{\frac{n+p}n}.\]

It is easy to see how the $L_p$-dual mixed volume relates to the $L_p$ Random Simplex Inequality, as
\[\tV(L_n, \N ^\circ(\mani L)) = \frac{n+p}n \I(\many L).\]
We also have $\I(\extrmoment_{p,\l, L_1}, \ldots, \extrmoment_{p,\l, L_n}) = \I(\many L)$ and $\tI(\extrsobolev_{p,L_1}, \ldots, \extrsobolev_{p,L_n}) = \tI(\many L)$.

For a \cb $K$ with smooth boundary and strictly \pc, the $L_p$ surface area measure of $K$ is absolutely continuous with respect to the invariant measure of the sphere and 
\[
	\int_{S^{n-1}} \phi(\xi) d S_{p,K}(\xi) = \int_{S^{n-1}} \phi(\xi) f_p(K,\xi) d \xi
\]
for every measurable function $\phi$, where $f_p$ is called the $L_p$ curvature function.
For $p=1$ this is the inverse of the Gauss curvature.
The $p$-affine surface area of $K$ is defined by
\begin{align*}
	\Omega_p(K) 
	&= \int_{S^{n-1}} f_p(K, \xi)^{\frac n{n+p}} d\xi\\
	&= \int_{S^{n-1}} f_p(K, \xi)^{-\frac p{n+p}} dS_{p,K}.\\
\end{align*}
For $p=1$ we obtain the affine surface area measure
\[\Omega(K) = \int_{\partial K} \kappa(x)^{\frac 1{n+1}} dx.\]

We refer to \cite[Section~10.5]{schneider2014convex} for more general definitions, and to \cite{meyer1997p} for a geometric interpretation.
$\Omega_p$ is a centro-affine invariant (invariant under volume preserving linear transformations), but not translation invariant unless $p=1$.
%
%
\section{Functional Random Simplex Inequality}
\label{sec_rsi}
Given functions $\many[k] l$ and convex bodies $L_{k+1}, \ldots, L_n$, we write $\I(\many[k] l, L_{k+1}, \ldots, L_n) = \I(\many l)$ where $l_i = \extrmoment_{p,\l,L_i}$ for $i = k+1, \ldots, n$. This quantity is independent of $\l$.

\begin{potwr}{thm_iso_f}
We prove by induction in $k$, the following 
\begin{claim*}
	Let $\many[k] l$ be \cf, and let $L_{k+1}, \ldots, L_{n-1}$ be convex bodies, then
	\[\vol(\Nc (\many[k] l, L_{k+1}, \ldots, L_{n-1})) \leq \constisos \prod_{i=1}^{k} \constmoment ^{-\frac np} \|l_i\|_1^{-\frac{n+p\lp}p} \|l_i\|_\l^{\lp} \prod_{j=k+1}^{n-1} \vol(L_j)^{- \frac{n+p}p}\]
	and equality holds if and only if $\many[k] l$ have the form $l_i(x) = a_i \extrsobolev_p(b_i |A.(x-x_i)|_2)$ where $x_i \in \R^n$, $A \in \gl$, and $L_i = a_i A^{-1}.B_2$ for $i \geq k+1$.
	Taking $k=n-1$ we obtain the Theorem with $\constisof = \constisos \constmoment^{-\frac {n^2}p}$.

\end{claim*}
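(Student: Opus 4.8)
The plan is to run the induction on $k$ exactly as announced; the engine is a self-referential identity which, at $k=0$, is precisely the computation in the Introduction showing that Theorem~\ref{thm_rsi_s} implies Theorem~\ref{thm_iso_s}. The base case $k=0$ \emph{is} Theorem~\ref{thm_iso_s} (equivalently Theorem~\ref{thm_rsi_s}), whose equality case is that of homothetic origin-symmetric ellipsoids.

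For the inductive step fix $1\le k\le n-1$, let $l_1,\dots,l_k$ be non-zero \cf and $L_{k+1},\dots,L_{n-1}$ convex bodies, and set $B:=\Nc(l_1,\dots,l_k,L_{k+1},\dots,L_{n-1})$. I would first record that, for any such data, $\N$ of them is a genuine origin-symmetric \cb containing the origin in its interior: expanding $D_n(y_1,\dots,y_{n-1},\xi)=|\langle\xi,w(y_1,\dots,y_{n-1})\rangle|$ along the last column, its support function in the variable $\xi$ is the $L^p$-norm of $\xi\mapsto\langle\xi,w\rangle$ against the push-forward of the weights, hence a seminorm by Minkowski's inequality ($p\ge1$), and it is nondegenerate because the integration variables range over sets with nonempty interior, so the vectors $w$ are not confined to a hyperplane. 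In particular $B$ is such a body, with $0<\vol(B)<\infty$. Using $\tV(B,B)=\vol(B)$, then $h(B^\circ,\cdot)=\|\cdot\|_B$ together with $(\Nc)^\circ=\N$ and Fubini, and finally the symmetry of $D_n$ in its arguments, one gets
\[
\vol(B)=\tV(B,B)=\tfrac{n+p}{n}\,\I(l_1,\dots,l_k,L_{k+1},\dots,L_{n-1},B)=\tV(l_k,C),\qquad C:=\Nc(l_1,\dots,l_{k-1},L_{k+1},\dots,L_{n-1},B),
\]
and $C$ is again a genuine origin-symmetric \cb containing the origin in its interior, its $n-1$ arguments all being nondegenerate.

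The second step feeds in the two inequalities. To $\tV(l_k,C)$ I would apply the moment inequality~\eqref{ineq_moment} (legitimate since $\l\in\domlambda$ and $C$ contains the origin in its interior), and to $\vol(C)$ the inductive hypothesis, i.e.\ the Claim for $k-1$ with its $n-k$ convex bodies taken to be $L_{k+1},\dots,L_{n-1}$ together with $B$. Raising the bound for $\vol(C)$ to the power $-p/n$ and substituting it, the factors $\|l_i\|_1,\|l_i\|_\l$ with $i\le k-1$, the factors $\vol(L_j)$ with $j\ge k+1$, and the constants $\constisos,\constmoment$ assemble into the claimed product, leaving an inequality of the form
\[
\vol(B)\ \ge\ c\,\Big(\prod_{i=1}^{k}\|l_i\|_1^{\frac{n+p\lp}{n}}\|l_i\|_\l^{-\frac pn\lp}\Big)\Big(\prod_{j=k+1}^{n-1}\vol(L_j)^{\frac{n+p}{n}}\Big)\,\vol(B)^{\frac{n+p}{n}},
\]
with $c=c(n,p,\l)>0$. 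Since $\tfrac{n+p}{n}>1$ and $0<\vol(B)<\infty$, I would divide by $\vol(B)^{(n+p)/n}$ and raise to the power $-n/p$; this is the Claim for $k$, with constant $\constisos\prod_{i=1}^{k}\constmoment^{-n/p}$, and the case $k=n-1$ is Theorem~\ref{thm_iso_f}. For the equality statement one tracks the two equality conditions: equality in~\eqref{ineq_moment} forces $l_k(x)=a_k\,\extrmoment_{p,\l}(b_k\|x\|_C)$, while equality in the inductive hypothesis forces its arguments — in particular $B$, hence also $C$ — to be origin-symmetric ellipsoids, and propagating these conditions down the recursion identifies each $l_i$ with the extremal function in the statement.

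The genuinely delicate point is this self-referential step. One must check that $B$ and, above all, the auxiliary body $C=\Nc(l_1,\dots,l_{k-1},L_{k+1},\dots,L_{n-1},B)$ really are convex bodies containing the origin in the interior — without this neither $\tV(\cdot,\cdot)=\vol(\cdot)$, nor the moment inequality~\eqref{ineq_moment}, nor the inductive hypothesis applies — and one must be willing to extract a genuine upper bound for $\vol(B)$ from the apparently circular estimate above, which is legitimate only because the exponent $(n+p)/n$ exceeds $1$. Everything else is Fubini together with the routine bookkeeping of exponents and constants.
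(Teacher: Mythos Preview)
Your argument is precisely the paper's: the same auxiliary body (your $C$ is the paper's $K$), the same identity $\vol(B)=\tV(l_k,C)$ obtained via the symmetry of $\I$, the moment inequality~\eqref{ineq_moment} applied to $\tV(l_k,C)$, and then the inductive hypothesis applied to $C$. You are more explicit than the paper on two points it leaves implicit---that $\N$ of nondegenerate data is a genuine origin-symmetric body, and that the self-referential inequality $\vol(B)\ge c\,\vol(B)^{(n+p)/n}$ can be unwound because $(n+p)/n>1$ and $0<\vol(B)<\infty$---but the route is the same.
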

	For $k=0$ the claim is exactly Theorem \ref{thm_iso_s}.
	Assume $k \geq 1$ and that the Lemma is true with $k$ replaced by $k-1$.
	Given functions $l_1, \ldots, l_k$ and sets $L_{k+1}, \ldots, L_{n-1}$ take 
	\[K = \N ^\circ (l_1, \ldots, l_{k-1}, \Nc (l_1, \ldots, l_{k},L_{k+1}, \ldots, L_{n-1}), L_{k+1}, \ldots, L_{n-1}).\]

	Using the commutativity of $\I$,
	\begin{align*}
		\tV(l_k, K)
		&= \frac{n+p}n \I(l_1, \ldots, l_{k-1}, \Nc (l_1, \ldots, l_{k},L_{k+1}, \ldots, L_{n-1}), L_{k+1}, \ldots, L_{n-1}, l_k)\\
		&= \frac{n+p}n \I(l_1, \ldots, l_{k-1}, l_k , L_{k+1}, \ldots, L_{n-1}, \Nc (l_1, \ldots, l_{k},L_{k+1}, \ldots, L_{n-1}))\\
		&= \tV(\Nc (l_1, \ldots, l_{k},L_{k+1}, \ldots, L_{n-1}), \Nc (l_1, \ldots, l_{k},L_{k+1}, \ldots, L_{n-1}))\\
		&= \vol(\Nc (l_1, \ldots, l_{k},L_{k+1}, \ldots, L_{n-1}))
	\end{align*}
	then by \eqref{ineq_moment} 
	\begin{align*}
		\vol(\Nc (l_1, \ldots,l_k, L_{k+1}, \ldots, L_{n-1}))
		&\geq \constmoment \|l_k\|_1^{\frac{n+p\lp}n} \|l_k\|_\l^{-\frac {p \lp}{n} } \vol(K)^{-\frac pn}.
	\end{align*}
	Notice that the definition of $K$ involves only $k-1$ functions.
	Then by the induction hypothesis we have
	\begin{align*}
		\vol(\Nc (l_1, &\ldots, l_k, L_{k+1}, \ldots, L_{n-1})) \geq \constmoment \|l_k\|_1^{\frac{n+p\lp}n} \|l_k\|_\l^{-\frac {p \lp}{n} }\\
		&\times \left( \constisos \prod_{i=1}^{k-1} \constmoment^{-\frac np} \|l_i\|_1^{-\frac{n+p\lp}p} \|l_i\|_\l^{\frac {p \lp}{p} }\vol(\Nc (l_1, \ldots, l_k, L_{k+1}, \ldots, L_{n-1}))^{- \frac{n+p}p} \prod_{j=k+1}^{n-1} \vol(L_j)^{- \frac{n+p}p} \right)^{-\frac pn}\\
		&= \constmoment \|l_k\|_1^{\frac{n+p\lp}n} \|l_k\|_\l^{-\frac {p \lp}{n} }\\
		&\times \left( \constisos \prod_{i=1}^{k-1} \constmoment^{-\frac np} \|l_i\|_1^{-\frac{n+p\lp}p} \|l_i\|_\l^{\lp} \prod_{j=k+1}^{n-1} \vol(L_j)^{- \frac{n+p}p} \right)^{-\frac pn} \vol(\Nc (l_1, \ldots, l_k, L_{k+1}, \ldots, L_{n-1}))^{\frac{n+p}n} \\
	\end{align*}
	that proves the Lemma.

	The equality case follows from the equality case of the induction hypothesis (the equality case of Theorem \ref{thm_iso_s} for $k=n-1$), and the equality case of \eqref{ineq_moment}.
\end{potwr}

We complete the proof of Corollary \ref{cor_rsi_f} by proving the equivalence between random and isoperimetric inequalities in the functional setting.

\begin{potwr}[Corollary]{cor_rsi_f}
	By the dual mixed volume inequality and Theorem \ref{thm_iso_f},
\begin{align*}
	\I(\many l) 
	&= \frac n{n+p} \tV(l_n, \N ^\circ(\mani l)) \\
	&\geq \frac n{n+p} \constmoment \|l_n\|_1^{\frac{n+p\lp}n} \|l_n\|_\l^{-\frac {p \lp}{n} } \vol(\N ^\circ(\mani l))^{-\frac pn}\\
	&\geq \constrsif \|l_n\|_1^{\frac{n+p\lp}n} \|l_n\|_\l^{-\frac {p \lp}{n} } \prod_{i=1}^{n-1} \|l_i\|_1^{\frac{n+p\lp}n} \|l_i\|_\l^{-\frac {p \lp}{n} }
\end{align*}
where $\constrsif = \frac n{n+p} \constmoment \constisof^{-\frac pn}$.
\end{potwr}

\section{The Dual Random Simplex Inequality}
\label{sec_rsid}
In this section we prove Theorems \ref{thm_rsid_s} and \ref{thm_rsid_f}.
Let $L$ be a \cb with smooth boundary and \pc, then it has a well defined $p$-curvature function $f_{p,L}$ defined as the density of the $p$-surface area measure with respect to the Lebesgue measure in the sphere, this is
\[d S_{p,L} = f_{p,L}(\xi) d \xi.\]

For $L$ as before, we consider the star body $L^*_p$ given by the radial function $r(L^*_p, \xi) = f_{p,L}(\xi)^{\frac 1{n+p}}$ and its volume given by
\begin{equation}
	\label{ident_paffinesurfacearea}
	\vol(L^*_p) = \frac 1n \int_{S^{n-1}}f_{p,L}(\xi)^{\frac n{n+p}} = \frac 1n \Omega_p(L).
\end{equation}

Now we prove Theorem \ref{thm_rsid_s}.

\begin{potwr}{thm_rsid_s}
	By definition of $\tI$ and $L^*_p$,
\begin{align}
	\label{proof_dualization_1}
	\tilde \I(L_1, \cdots, L_n)
	&= \int_{S^{n-1}} \cdots \int_{S^{n-1}} D_n(\xi_1, \ldots, \xi_n)^p dS_{p,L_1}(\xi_1) \ldots dS_{p,L_n}(\xi_n) \nonumber\\
	&= \int_{S^{n-1}} \cdots \int_{S^{n-1}} D_n(\xi_1, \ldots, \xi_n)^p \prod_{i=1}^n f_{p,L_i}(\xi_i) d \xi_1 \ldots d \xi_n \nonumber\\
	&= \int_{S^{n-1}} \cdots \int_{S^{n-1}} D_n(\xi_1, \ldots, \xi_n)^p \prod_{i=1}^n r_{{(L_i)}^*_p}(\xi_i)^{n+p} d \xi_1 \ldots d \xi_n \nonumber\\
	&= (n+p)^n \int_{(L_1)^*_p} \cdots \int_{(L_n)^*_p} D_n(x_1, \ldots, x_n)^p dx_1 \ldots dx_n\nonumber\\
	&= (n+p)^n \I({(L_1)}^*_p, \cdots, {(L_n)}^*_p)\nonumber\\
\end{align}
	The sets ${(L_i)}^*_p$ are not necessarily convex, but Corollary \ref{cor_rsi_f} applied to indicator functions with $\l = \infty$ implies that Theorem \ref{thm_rsi_s} remains valid for measurable sets (notice that $\constmoment[\infty] = 1$).
	By this inequality and identity \eqref{ident_paffinesurfacearea} we get
	\begin{align*}
	\tilde \I(L_1, \cdots, L_n) 
		& \geq (n+p)^n \constrsis \prod_{i=1}^n \vol((L_i)^*_p)^{\frac {n+p}n} = (n+p)^n \frac \constrsis {n^{n+p}} \prod_{i=1}^n \Omega_p(L_i)^{\frac {n+p}n}\\
	\end{align*}
\end{potwr}

The proof of Theorem \ref{thm_rsid_f} requires a simple lemma:
\begin{lemma}
	\label{lem_levelsets}
	Let $g:(0,\infty) \to [0,\infty)$ be a \cf[] and $\lambda \in \domlambda$, then
	\begin{equation}
		\label{ineq_levelsetfunction}
		\left( \int_0^\infty g(t)^{\frac{n+p}n} dt \right)^{\frac n{n+p}} 
		\geq \constlevelsets \left( \int_0^\infty g(t) t^{\lambda-1} dt \right)^{-\frac p{n+p} \frac 1{\l-1}}  \left( \int_0^\infty g(t) dt \right)^{\frac{n+p\lp}{n+p}}
	\end{equation}
	where 
	\[\constlevelsets = \left\{ \begin{array}{cc}
		(1-\lambda ) \left(\frac{p}{n+p}\right)^{\frac{p}{(\lambda -1) (n+p)}} \left(\lambda -\frac{n}{n+p}\right)^{\frac{n-\lambda  (n+p)}{(\lambda -1) (n+p)}}\frac{\Gamma \left(\frac{1}{1-\lambda }\right) }{\Gamma \left(\frac{n}{p}+2\right) \Gamma \left(\frac{\lambda }{1-\lambda }-\frac{n}{p}\right) }& \hbox{ if } \lambda < 1\\
		(\lambda -1) \left(\frac{p}{n+p}\right)^{\frac{p}{(\lambda -1) (n+p)}} \left(\lambda +\frac{p}{n+p}-1\right)^{\frac{-\lambda  n+n-\lambda  p}{(\lambda -1) (n+p)}} \frac{\Gamma \left(\frac{n}{p}+\frac{1}{\lambda -1}+2\right)}{\Gamma \left(\frac{\lambda }{\lambda -1}\right) \Gamma \left(\frac{n}{p}+2\right)} & \hbox{ if } \lambda > 1\\
		\end{array}
		\right.
	\]
	and \eq $g(t) = a p_\lambda(t/r)$ for some $a,r>0$ and 
	\[
		p_\lambda(t) = \left\{ 
			\begin{array}{cc}
			(t^{\l-1} - 1)_+^{\frac np} & \hbox{ if } \l<1\\
			(1-t^{\lambda-1})^{\frac{n}{p}}_+ & \hbox{ if } \l>1\\
			\chi_{[0,1]}(t) & \hbox{ if } \l=\infty.
			\end{array}
		\right.
	\]
	For $\lambda = \infty$ we interpret inequality \eqref{ineq_levelsetfunction} as
	\[\left( \int_0^\infty g(t)^{\frac{n+p}n} dt \right)^{\frac n{n+p}} \geq (\supp g)^{-\frac p{n+p} } \int_0^\infty g(t) dt\]
	where $\supp g$ is the minimum $a>0$ such that $g(t)=0$ for all $t>a$.
\end{lemma}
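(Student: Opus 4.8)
The plan is to exploit the two-parameter scaling invariance $g(t)\mapsto a\,g(t/r)$ ($a,r>0$) of \eqref{ineq_levelsetfunction}: using $\lp=\lambda/(\lambda-1)$ one checks that both sides acquire the same power of $a$ and of $r$. Hence it suffices to compare an arbitrary $g$ (which we may assume $\not\equiv 0$, so that $c_1:=\int_0^\infty g\,dt$ and $c_2:=\int_0^\infty g\,t^{\lambda-1}\,dt$ are positive and, by continuity and compactness of the support, finite) with a normalized copy of the conjectured extremal function $p_\lambda$. First I would record the three reference moments $I_0=\int_0^\infty p_\lambda(s)\,ds$, $I_1=\int_0^\infty p_\lambda(s)\,s^{\lambda-1}\,ds$ and $I_Q=\int_0^\infty p_\lambda(s)^{(n+p)/n}\,ds$, noting that they are finite precisely when $\lambda\in\domlambda$ — the only possible divergence is at $s=0$ when $\lambda<1$, where $p_\lambda(s)\sim s^{(\lambda-1)n/p}$. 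Writing $g_0(t)=a\,p_\lambda(t/r)$ one has $\int g_0=a r I_0$ and $\int g_0\,t^{\lambda-1}=a r^\lambda I_1$, so because $\lambda\neq1$ the system $arI_0=c_1$, $ar^\lambda I_1=c_2$ has a unique solution $r=\bigl(c_2 I_0/(c_1 I_1)\bigr)^{1/(\lambda-1)}$, $a=c_1/(rI_0)$; this is the comparison function I fix.

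The heart of the argument is a tangent-line estimate for the strictly convex map $x\mapsto x^q$ with $q=\frac{n+p}{n}$ (so $q-1=\frac pn$). Since $g_0^{\,q-1}=a^{p/n}\bigl(1-(t/r)^{\lambda-1}\bigr)_+$ when $\lambda>1$ and $g_0^{\,q-1}=a^{p/n}\bigl((t/r)^{\lambda-1}-1\bigr)_+$ when $\lambda<1$, in both cases $g_0^{\,q-1}=\ell_+$ for an affine-in-$t^{\lambda-1}$ function $\ell$, with $g_0^{\,q-1}=\ell$ on $\supp g_0=(0,r)$ and $\ell\le 0=g_0^{\,q-1}$ off it. Convexity gives $\int g^q\ge\int g_0^q+q\int g_0^{\,q-1}(g-g_0)$; the cross term dominates $\int\ell\,(g-g_0)$ pointwise (equality on $(0,r)$, while off it $g_0^{\,q-1}(g-g_0)=0\ge\ell g=\ell(g-g_0)$ because $\ell\le0$ and $g\ge0$), and $\int\ell\,(g-g_0)=0$ by the choice of $a,r$. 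Hence $\int g^q\ge\int g_0^q$. It remains to evaluate the right side for $g_0$: $\int g_0^q=a^q r I_Q$, and substituting the values of $a$ and $r$ and taking the $1/q$-th power turns this into $\constlevelsets\,c_2^{-\frac p{n+p}\frac1{\lambda-1}}c_1^{\frac{n+p\lp}{n+p}}$, with $\constlevelsets=I_Q^{\,n/(n+p)}I_0^{-1}(I_0/I_1)^{-\frac{p}{(n+p)(\lambda-1)}}$; since $c_1=\int g$ and $c_2=\int g\,t^{\lambda-1}$ this is exactly \eqref{ineq_levelsetfunction}, and the explicit value of $\constlevelsets$ claimed in the lemma comes out of evaluating $I_0,I_1,I_Q$ as Beta integrals via $u=s^{\lambda-1}$ and simplifying with $B(x,y)=\Gamma(x)\Gamma(y)/\Gamma(x+y)$. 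The case $\lambda=\infty$ is the same but simpler: take $r=\supp g$ and $g_0=(c_1/r)\chi_{[0,r]}$, so the cross term vanishes outright and $\int g^q\ge\int g_0^q=(c_1/r)^q r$, i.e. $(\int g^q)^{1/q}\ge(\supp g)^{-p/(n+p)}c_1$. For the equality statement, strict convexity of $x\mapsto x^q$ forces $g=g_0$ a.e.\ in the tangent-line step (and $g_0$ realizes equality), so \eq $g(t)=a\,p_\lambda(t/r)$.

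There is no serious conceptual obstacle; the delicate points are (i) checking that $I_0,I_1,I_Q$ converge on exactly the range $\lambda\in\domlambda$ and that the normalization system for $(a,r)$ is solvable, and (ii) the exponent bookkeeping — in particular verifying $1+\frac{p}{(n+p)(\lambda-1)}=\frac{n+p\lp}{n+p}$ and that the prefactor $I_Q^{\,n/(n+p)}I_0^{-1}(I_0/I_1)^{-p/((n+p)(\lambda-1))}$ matches the Gamma-function expression in the statement. Both are routine.
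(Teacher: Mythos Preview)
Your proof is correct. Both the paper and you use the extremal profile $p_\lambda$ as a comparison function, but the implementations differ. The paper starts from the pointwise inequality $p_\lambda(t/r)^{p/n}\ge 1-(t/r)^{\lambda-1}$ (resp.\ $(t/r)^{\lambda-1}-1$ for $\lambda<1$), multiplies by $g$, integrates, bounds $\int g\,p_\lambda(\cdot/r)^{p/n}$ by H\"older against $(\int g^{(n+p)/n})^{n/(n+p)}$, and then \emph{optimizes} over the free parameter $r$. You instead fix $(a,r)$ from the outset by matching the two moments $\int g$ and $\int g\,t^{\lambda-1}$ to those of $g_0=a\,p_\lambda(\cdot/r)$, and apply the tangent-line form of convexity of $x\mapsto x^{(n+p)/n}$ at $g_0$; the affine-in-$t^{\lambda-1}$ structure of $g_0^{p/n}$ makes the cross term vanish. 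These are the two classical dual ways of running such an argument. Your route avoids the optimization step, yields the constant directly as $I_Q^{\,n/(n+p)}I_0^{-1}(I_0/I_1)^{-p/((n+p)(\lambda-1))}$, and gives the equality case immediately from strict convexity; the paper's H\"older-then-minimize route is slightly more robust in that it never needs to check separately that $\int g_0^{p/n}g$ and $\int \ell\,g$ are finite (a point you do handle, using $\lambda>0$ and the compact support of $g$).
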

\begin{proof}
	For $\lambda > 1$ and $t>0$, let $p_{\lambda}(t) = (1-t^{\lambda-1})^{\frac{n}{p}}_+$.
	Then
		\[p_{\lambda}(t/r)^{\frac pn} \geq 1-t^{\lambda-1}r^{1-\lambda}.\]
	Multiplying by $g(t)$ and integrating
	\[
		\int_{0}^{\infty}g(t)p_{\lambda}(t/r)^{\frac{p}{n}}dt \geq \int_{0}^{\infty}g(t)dt - r^{1-\lambda}\int_{0}^{\infty}g(t)t^{\lambda-1}dt.
	\]
	By H\"older,
	\begin{align*}
		\int_{0}^{\infty}g(t) dt 
		&\leq \int_{0}^{\infty}g(t)p_{\lambda}(t/r)^{\frac{p}{n}}dt + r^{1-\lambda}\int_{0}^{\infty}g(t)t^{\lambda-1}dt\\
		&\leq  \left(\int_{0}^{\infty} g(t)^{\frac{n+p}{n}}dt\right)^{\frac{n}{n+p}}\left(\int_{0}^{\infty}p_{\lambda}(t/r)^{\frac{n+p}{n}}dt\right)^{\frac{p}{n+p}} + r^{1-\lambda}\int_{0}^{\infty}g(t)t^{\lambda-1}dt\\
		&\leq  \left(\int_{0}^{\infty} g(t)^{\frac{n+p}{n}}dt\right)^{\frac{n}{n+p}}\left(\int_{0}^{\infty}p_{\lambda}(t)^{\frac{n+p}{n}}dt\right)^{\frac{p}{n+p}}r^{\frac{p}{n+p}} + r^{1-\lambda}\int_{0}^{\infty}g(t)t^{\lambda-1}dt.
	\end{align*}
	Minimizing the right-hand side for $r>0$, we obtain the result for $\lambda > 1$.
	For the case $\lambda \in \left(\frac{n}{n+p}, 1\right)$,
	we define $q_\l(t) = (t^{\lambda-1} - 1)_+^{\frac{n}{p}}$.
	Then, $q_{\lambda}(t)^{\frac{p}{n}} \geq t^{\lambda-1} - 1$
	and it follows that
	\[
		\int_{0}^{\infty}g(t)q_{\lambda}\left(t/r\right)^{\frac{p}{n}}dt \geq r^{1 - \lambda}\int_{0}^{\infty}g(t)t^{\lambda - 1}dt - \int_{0}^{\infty}g(t)dt.
	\]
	By H\"older
	\begin{align*}
		\int_{0}^{\infty}g(t)t^{\lambda - 1}dt
		&\leq r^{\lambda-1}\int_{0}^{\infty}g(t)dt + r^{\lambda-1}\left(\int_{0}^{\infty} g(t)^{\frac{n+p}{n}}dt\right)^{\frac{n}{n+p}}\left(\int_{0}^{\infty}q_{\lambda}\left(t/r\right)^{\frac{n+p}{n}}dt\right)^{\frac{p}{n+p}}\\
		&\leq r^{\lambda-1}\int_{0}^{\infty}g(t)dt + r^{\lambda-\frac{n}{n+p}}\left(\int_{0}^{\infty} g(t)^{\frac{n+p}{n}}dt\right)^{\frac{n}{n+p}}\left(\int_{0}^{\infty}q_{\lambda}(t)^{\frac{n+p}{n}}dt\right)^{\frac{p}{n+p}}
	\end{align*}
	and again we conclude minimizing with respect to $r>0$.
	The case $\lambda = \infty$ is just an application of H\"older inequality.
\end{proof}  

\begin{potwr}{thm_rsid_f}
Let $\many l$ be \cfs.
By Sard's Lemma, almost every level set of each function $l_i$ is a $C^2$-smooth manifold.
For such a function $l=l_i$, the relation \eqref{def_Srf} implies that the surface area measure of $l$ is absolutely continuous with respect to the invariant measure in $S^{n-1}$, then we also have a well defined $p$-curvature function $f_{p,l}$ such that
$d S_{p,l} = f_{p,l}(\xi) d \xi$.
We define the star body $(l)^*_p$ similarly, by
\[r((l)^*_p, \xi)^{n+p} = f_{p,l}(\xi).\]
As in \eqref{proof_dualization_1} we compute
\begin{align}
	\label{functionalineq}
	\tI(l_1, \cdots, l_n) = \I( (l_1)^*_p, \ldots, (l_n)^*_p) \geq \constrsis \prod_{i=1}^n \vol((l_i)^*_p)^{\frac{n+p}{n}}.
\end{align}

Fix $l = l_i$ for $i = 1, \ldots n$.
Consider the function $n^l:\R^n \to S^{n-1}$ given by $n^l(x) = \frac{\nabla l(x)}{|\nabla l(x)|_2}$.
Again by Sard's Lemma applied to $n^l$, for almost every $\xi$, the set
\[P(\xi) = \{x \in \R^n\ /\ \nabla l(x) \neq 0 \hbox{ and } n^l(x) = \xi\} \]
is a $1$-dimensional submanifold of the open subset of $\R^n$ where $\nabla l \neq 0$.
By the definition of $f_{p,l}$ we have for any $p$-homogeneous function $\phi$ on $\R^n$,
\begin{align}
	\label{proof_compfpl}
	\int_{S^{n-1}} \phi(\xi) d S_{p, l} 
	&= \int_{\R^n} \phi(\nabla l(x)) dx\nonumber\\
	&= \int_{S^{n-1}} \int_{P_l(\xi)} J_{n^l}(x)^{-1} \phi(\nabla l(x)) dx \ d\xi\nonumber\\
	&= \int_{S^{n-1}} \int_{P_l(\xi)} J_{n^l}(x)^{-1} |\nabla l(x)|^p dx \ \phi(\xi) d\xi 
\end{align}
where we used the generalised co-area formula \cite{nicolaescu2011coarea}, and $J_{n^l}$ is the Jacobian of the function $n^l:\R^n \to S^{n-1}$ given by $n^l(x) = \frac{\nabla l(x)}{|\nabla l(x)|_2}$.
The Jacobian of a transformation at a point $x_0$ can be computed by the formula (see \cite[Lemma~1.2]{nicolaescu2011coarea})
\[J = \frac{ D_{n-1}(A.u_1, \ldots, A.u_{n-1}) |u_n|}{ D_{n}(u_1, \ldots, u_{n-1}, u_n)}\]
where $A$ is the differential of $n^l$ at $x_0$, $u_n$ is a vector generating the kernel of $A$ and $u_1, \ldots, u_{n-1}$ is any basis of a complementary subspace to $\ker(A)$.
Now, taking $u_1, \ldots, u_{n-1}$ to be an orthonormal basis of the orthogonal space to $\xi$, and $u_n$ unitary, we obtain
$J = | \kappa / \langle \xi, u_n \rangle |$
where $\kappa$ is the Gauss curvature of the level set of $l$ at $x_0$.
Observe that $u_n$ is the unit tangent vector of the curve $P_l(\xi)$ at $x_0$.
From \eqref{proof_compfpl} we obtain
\begin{equation}
	\label{proof_form_f_kappa}
	f_{p,l}(\xi) = \int_{P_l(\xi)} \kappa(x)^{-1}  |\nabla l(x)|^{p} |\xi^T| d \mathcal H_1(x)
\end{equation}
where $|\xi^T|$ is the tangential component over the curve $P_l(\xi)$.

Since $l$ has convex level sets with \pc, each $P(\xi)$ can be parametrized by a curve $\gamma_\xi:(0, \|l\|_\infty) \to \R$ satisfying $l(\gamma_\xi(t)) = t$. We compute
\begin{align*}
	f_{p,l}(\xi) 
	&= \int_0^{\|l\|_\infty} \kappa(\gamma_\xi(t))^{-1} |\nabla l(\gamma_\xi(t))|^{p} |\langle \xi, \gamma_\xi'(t) \rangle| dt\\
	&= \int_0^{\|l\|_\infty} \kappa(\gamma_\xi(t))^{-1} |\nabla l(\gamma_\xi(t))|^{p-1} |\langle \nabla l(\gamma_\xi(t)), \gamma_\xi'(t) \rangle| dt\\
	&= \int_0^{\|l\|_\infty} \kappa(\gamma_\xi(t))^{-1} |\nabla l(\gamma_\xi(t))|^{p-1}  dt.\\
\end{align*}
By formula \eqref{proof_form_f_kappa}, applying the parametrization given by $\gamma_\xi$, the Minkowski integral inequality and the definition of the surface area measure we have
\begin{align*}
	\vol(l^*_p)
	&= \int_{S^{n-1}} \left( \int_0^{\|l\|_\infty} \kappa(\gamma_\xi(t))^{-1} |\nabla l(\gamma_\xi(t))|^{p-1} dt \right)^{\frac{n}{n+p}} d\xi\\
	&\geq \left( \int_0^{\|l\|_\infty} \left( \int_{S^{n-1}} \kappa(\gamma_\xi(t))^{-\frac n{n+p}} |\nabla l(\gamma_\xi(t))|^{\frac {(p-1)n}{n+p}} d\xi \right)^{\frac{n+p}{n}} dt\right) ^{\frac{n}{n+p}} \\
	&= \left( \int_0^{\|l\|_\infty} \left( \int_{\partial N_{l,t}} \kappa(x)^{\frac p{n+p}} |\nabla l(x)|^{\frac {(p-1)n}{n+p}} d S_{N_{l,t}}(x) \right)^{\frac{n+p}{n}} dt\right) ^{\frac{n}{n+p}}.\\
\end{align*}
	The Gauss curvature of the level set of a function can be computed taking the differential of $n^l$ restricted to $\xi^\bot$ giving
	$\kappa = \det \left( \begin{array}{c|c} 0 & \xi^T \\ \hline \\ \xi & |\nabla l|^{-1} Hl \end{array} \right)$,
	so we have $|\kappa| = |\det(K l)| |\nabla l|^{-n-1}$  and
	\[
		\vol(l^*_p) =\left( \int_0^{\|l\|_\infty} \left( \int_{\partial N_{l,t}} |\det(K l(x))|^{\frac p{n+p}} |\nabla l(x)|^{-1} d S_{N_{l,t}}(x) \right)^{\frac{n+p}{n}} dt\right) ^{\frac{n}{n+p}}.
	\]
	Let us define $\Omega_p(l,t) = \int_{\partial N_{l,t}} |\det(K l(x))|^{\frac p{n+p}} |\nabla l(x)|^{-1} d S_{N_{l,t}}(x)$, then a simple computation shows that
	\[\Omega_p(l^\alpha, t^\alpha) = \Omega_p(l,t) (\alpha t^{\alpha-1})^{(p-1)\frac n{n+p}},\]
	and taking $\lambda = 1+(\alpha-1) (n+1) \frac p{n+p}$ and $g(t) = \Omega_p(l,t)$ in Lemma \ref{lem_levelsets},
\begin{align*}
	\vol(l^*_p)
	&\geq \left( \int_0^\infty \Omega_p(l,t)^{\frac{n+p}{n}} dt\right)^{\frac{n}{n+p}}\\
	&\geq \constlevelsets \left( \int_0^\infty \Omega_p(l,t) dt \right)^{\frac{n+p\lp}{n+p}} \left( \int_0^\infty \Omega_p(l,t) t^{\lambda-1} dt \right)^{-\frac p{(n+p)(\l-1)}}\\
	&= \alpha^{\frac p{\l-1}}\constlevelsets \left( \int_0^\infty \Omega_p(l,t) dt \right)^{\frac{n+p\lp}{n+p}} \left( \int_0^\infty \Omega_p(l,t) (\alpha t^{\alpha-1})^{(p-1)\frac n{n+p}} (\alpha t^{\alpha-1}) dt \right)^{-\frac p{(n+p)(\l-1)}}\\
	&= \alpha^{\frac p{\l-1}} \constlevelsets \left( \int_0^\infty \Omega_p(l,t) dt \right)^{\frac{n+\alpha'}{n+1}} \left( \int_0^\infty \Omega_p(l^\alpha,s) ds \right)^{-\frac 1{n+1} \frac 1{\alpha-1}}\\
	&= \alpha^{\frac p{\l-1}} \constlevelsets \Omega_p(l)^{\frac{n+\alpha'}{n+1}} \Omega_p(l^\alpha)^{-\frac 1{n+1} \frac 1{\alpha-1}}
\end{align*}
and we obtain the result with $\constrsidf = \constrsis(\frac{\alpha^{\frac p{\l-1}}}n \constlevelsets)^{n+p}$.


	The equality case follows from the equality case of Theorem \ref{thm_rsi_s} in \eqref{functionalineq}, the equality case of Lemma \ref{lem_levelsets} for the function $\Omega_p(l,t)$, and the formula
	$\Omega_p(F(|x|_2), F(t)) = n \omega_n F'(t)^{(p-1) \frac n{n+p}}$.
\end{potwr}

\section{Open problems}
\label{sec_open}
In this section we discuss the dual inequality \eqref{conjineq_rsid_s_first} and its relation to the body $\tN$ and the Petty conjecture. 
Let us consider the following open problem:
\begin{problem}
	\label{conj_baseconjecture}
	Let $L$ be a \cb and $1\leq p <n$, then 
	\begin{equation}
		\label{conjineq_baseconjecture}
		\tI(\many L) \geq \constconjrsids \prod_{i=1}^n \vol(L_i)^{\frac{n-p}n}
	\end{equation}
	and \equas{\many L}.
\end{problem}

Now we show that the proof of Theorem \ref{thm_iso_f} works for $\tI$ and $\tN$ using the mixed volume.
As explained in the introduction, inequality \eqref{conjineq_rsid_s_first} is equivalent to the isoperimetric inequality $\vol(\tN(\mani L)) \geq \constconjisods \prod_{i=1}^{n-1} \vol(L_i)^{\frac{n-p}p}$. The proof of this equivalence is similar.
\begin{lemma}
	\label{lem_isod_sf}
	Assume Problem \ref{conj_baseconjecture} holds. Let $\mani l$ be \cf, let $L_{k+1}, \ldots, L_{n-1}$ be convex bodies and $1\leq p<n$, then
	\[\vol(\tN(\many[k] l, L_{k+1}, \ldots, L_{n-1})) \geq \constconjisods \prod_{i=1}^k \constcnv^{\frac np} \|l_i\|_{p^*}^n \prod_{j=k+1}^{n-1} \vol(L_j)^{\frac{n-p}p}\]
	and equality holds if and only if $\many[k] l$ have the form $l_i(x) = a_i \extrsobolev_p(b_i |A.(x-x_i)|_2)$ where $x_0 \in \R^n$, $A \in \gl$, and $L_i = a_i A^{-1}.B_2$ for $i \geq k+1$.
\end{lemma}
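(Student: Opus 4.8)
I would prove the Lemma as a line-by-line transcription of the proof of the Claim inside the proof of Theorem~\ref{thm_iso_f}, replacing the $L_p$-dual mixed volume $\tV$ by the $L_p$-mixed volume $\V$, the moment inequality \eqref{ineq_moment} by the functional Sobolev inequality \eqref{ineq_mixedvolume_f}, and $\N$ by $\tN$. The engine is the pair of identities
\[
\tI(M_1,\ldots,M_{n-1},L_n) = n\,\V(L_n,\tN(M_1,\ldots,M_{n-1})),\qquad \tI(M_1,\ldots,M_{n-1},l_n)=n\,\V(l_n,\tN(M_1,\ldots,M_{n-1})),
\]
valid when each $M_i$ is a convex body or a non-zero non-negative continuous function with compact support; in the second identity one uses that $D_n$ is even in its last slot, so $h(\tN(\cdots),\nabla l_n)=h(\tN(\cdots),-\nabla l_n)$ and the functional mixed volume of Section~\ref{sec_notation} applies. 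I would first record that $h(\tN(\cdots),\cdot)^p$ is an $L^p$-average, over a product of spheres against the measures $S_{p,L_i}$ or $S_p(l_i,\cdot)$, of the $p$-th powers of the seminorms $\xi\mapsto D_n(u_1,\ldots,u_{n-1},\xi)$; by Minkowski's integral inequality $\tN(\cdots)$ is then an origin-symmetric convex body, and since for the bodies and functions in play the generating measures have supports spanning $S^{n-1}$, it contains the origin in its interior, so that \eqref{ebvm} and \eqref{ineq_mixedvolume_f} genuinely apply to it.

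\textbf{Base case $k=0$.} Here the assertion is $\vol(\tN(\mani L))\geq \constconjisods\prod_{i=1}^{n-1}\vol(L_i)^{\frac{n-p}p}$, and I would establish its equivalence with Problem~\ref{conj_baseconjecture} exactly as in the Introduction but with $\V$ in place of $\tV$: from $\tI(\many L)=n\,\V(L_n,\tN(\mani L))$ and the mixed volume inequality \eqref{ebvm} one gets $\tI(\many L)\geq n\,\vol(L_n)^{\frac{n-p}n}\vol(\tN(\mani L))^{\frac pn}$, so the isoperimetric inequality implies \eqref{conjineq_baseconjecture} with $\constconjrsids=n\,\constconjisods^{p/n}$; conversely, using $\V(K,K)=\vol(K)$,
\[
n\,\vol(\tN(\mani L)) = \tI(\mani L,\tN(\mani L)) \geq \constconjrsids\,\vol(\tN(\mani L))^{\frac{n-p}n}\prod_{i=1}^{n-1}\vol(L_i)^{\frac{n-p}n},
\]
giving the isoperimetric inequality with $\constconjisods=(\constconjrsids/n)^{n/p}$. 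The equality cases (homothetic origin-symmetric ellipsoids, resp. homothetic ellipsoids for $p=1$) pass through both computations.

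\textbf{Induction step.} Assume the Lemma for $k-1$ functions. Given $l_1,\ldots,l_k$ and $L_{k+1},\ldots,L_{n-1}$, put $M=\tN(l_1,\ldots,l_k,L_{k+1},\ldots,L_{n-1})$ and
\[
K=\tN(l_1,\ldots,l_{k-1},M,L_{k+1},\ldots,L_{n-1}),
\]
a convex body generated by $k-1$ functions and $n-k$ bodies. Since $D_n$ is symmetric, $\tI$ is symmetric in its arguments, so by the two identities above
\[
n\,\V(l_k,K) = \tI(l_1,\ldots,l_{k-1},M,L_{k+1},\ldots,L_{n-1},l_k) = \tI(l_1,\ldots,l_k,L_{k+1},\ldots,L_{n-1},M) = n\,\V(M,M) = n\,\vol(M).
\]
Now \eqref{ineq_mixedvolume_f} applied to $f=l_k$ and the origin-symmetric body $K$ gives $\vol(M)=\V(l_k,K)\geq \constcnv\|l_k\|_{p^*}^p\vol(K)^{p/n}$, and $\vol(K)$ is bounded from below by the induction hypothesis applied to $l_1,\ldots,l_{k-1}$ and the bodies $M,L_{k+1},\ldots,L_{n-1}$. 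Substituting, a factor $\vol(M)^{\frac{n-p}n}$ appears on the right; moving it to the left, dividing, and raising to the power $n/p$ yields $\vol(M)\geq \constconjisods\prod_{i=1}^{k}\constcnv^{n/p}\|l_i\|_{p^*}^n\prod_{j=k+1}^{n-1}\vol(L_j)^{\frac{n-p}p}$, which is the claim at level $k$. The equality case follows by combining the equality case of \eqref{ineq_mixedvolume_f} (so $l_k(x)=a_k\extrsobolev_p(b_k\|x-x_k\|_K)$) with that of the induction hypothesis, which forces $K$ and all auxiliary bodies to be homothetic origin-symmetric ellipsoids $cA^{-1}B_2$; then $\|x-x_k\|_K$ is a multiple of $|A(x-x_k)|_2$, giving the stated normal form, and conversely these functions are readily checked to attain equality.

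\textbf{Main obstacle.} There is no genuine analytic difficulty here: the argument is a formal mirror of the proof of Theorem~\ref{thm_iso_f} with the dual objects replaced by their non-dual counterparts. The only points requiring care are structural — verifying that every mixed body $\tN(\cdots)$ is an origin-symmetric convex body with the origin in its interior, so that \eqref{ebvm} and \eqref{ineq_mixedvolume_f} truly apply (which may need a short approximation argument to exclude degenerate, lower-dimensional configurations of the generating measures), and bookkeeping the constants — and, of course, the entire statement is conditional on the still-open Problem~\ref{conj_baseconjecture}, which supplies the base of the induction.
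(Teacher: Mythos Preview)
Your proposal is correct and follows essentially the same approach as the paper's proof: the same auxiliary body $K=\tN(l_1,\ldots,l_{k-1},M,L_{k+1},\ldots,L_{n-1})$, the same identity $\V(l_k,K)=\vol(M)$ via the symmetry of $\tI$, then \eqref{ineq_mixedvolume_f} combined with the induction hypothesis, and the same algebraic rearrangement. Your write-up is in fact more detailed than the paper's (which is quite terse), notably in making explicit the base case $k=0$ as the equivalence with Problem~\ref{conj_baseconjecture} and in flagging the need to check that $\tN(\cdots)$ is a genuine origin-symmetric body with interior so that \eqref{ebvm} and \eqref{ineq_mixedvolume_f} apply.
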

\begin{proof}
Take $K = \tN(\many[k-1] l, \tN(\many[k-1] l, l_k, L_{k+1}, \ldots, L_{n-1}), L_{k+1}, \ldots, L_{n-1})$.
	As in the proof of Lemma \ref{thm_iso_f},
	\[
	\V(l_k,K) = \vol(\tN(\many[k] l, L_{k+1}, \ldots, L_{n-1}))
	\]
then by the functional mixed volume inequality \eqref{ineq_mixedvolume_f} and the induction hypothesis we get
\begin{align*}
	\vol(\tN(&\many[k] l, L_{k+1}, \ldots, L_{n-1})) \geq \constcnv \vol(K)^{\frac pn} \|l_k\|_{p^*}^p\\
	&\geq \constcnv \left( \constconjisods \prod_{i=1}^{k-1} \constcnv^{\frac np}\|l_i\|_{p^*}^n \vol(\N (\many[k] l, L_{k+1}, \ldots, L_{n-1}))^{\frac{n-p}p} \prod_{j=k+1}^{n-1} \vol(L_j)^{\frac{n-p}p} \right)^{\frac pn} \|l_k\|_{p^*}^p\\
\end{align*}
and that proves the lemma.
	The equality case follows from the equality case of the induction hypothesis (the equality case of Problem \ref{conj_baseconjecture} for $k=n-1$), and the equality case of \eqref{ineq_mixedvolume_f}.
\end{proof}

Summarizing, Problem \ref{conj_baseconjecture} is equivalent to the following sharp inequalities
	\begin{equation}
		\label{conjineq_rsid_f}
		\tI(\many l) \geq \constconjrsidf \prod_{i=1}^n \|l_i\|_{p^*}^{p}
	\end{equation}
	\begin{equation}
		\label{conjineq_isod_s}
		\vol(\tN(\mani L)) \geq \constconjisods \prod_{i=1}^{n-1} \vol(L_i)^{\frac{n-p}p}
	\end{equation}
	\begin{equation}
		\label{conjineq_isod_f}
		\vol(\tN(\mani l)) \geq \constconjisodf \prod_{i=1}^n \|l_i\|_{p^*}^{n}
	\end{equation}
	and \equas{\many L} and $\many l$ have the form $l_i(x) =a_i F_p(b_i |A.(x-x_i)| )$ with $x_0 \in \R^n$, $a_i, b_i > 0$, $A \in \gl$.
	Here the constants satisfy $\constconjrsids = n \constconjisods^{p/n}$, $\constconjrsidf = n \constcnv \constconjisodf^{p/n}$, $\constconjisodf = \constconjisods \constcnv^{(n-1)\frac np}$.
Inequalities \eqref{conjineq_baseconjecture} and \eqref{conjineq_isod_s} are equivalent, as well as inequalities \eqref{conjineq_rsid_f} and \eqref{conjineq_isod_f}.
Also by Lemma \ref{lem_isod_sf}, \eqref{conjineq_isod_s} implies \eqref{conjineq_isod_f}, while the converse is obvious.

As a motivation to these problems we make two remarks, first notice that a particular case of inequality \eqref{conjineq_rsid_f} is a sharp, affine invariant, $L_p$ Sobolev-like inequality
\begin{equation}
	\label{ineq_sobolevish}
	\left( \manyint{\R^n} D_n(\nabla f(x_1), \ldots, \nabla f(x_n))^p \manydx \right)^{\frac 1{np}} \geq \constconjrsidf^{\frac 1{np}} \|f\|_{p^*}.  
\end{equation}

Second, for $p=1$, we have $\frac 1{n!} \tI[1](\man L) = \vol(\Pi L)$,
and inequality \eqref{conjineq_baseconjecture} for $L_1 = \cdots = L_n = L$ becomes Petty's conjectured inequality \eqref{ineq_pettyconj}.
By the Blaschke-Santal\'o inequality for symmetric bodies \cite[(10.28)]{schneider2014convex}, inequality \eqref{ineq_pettyconj} is stronger than the Petty Projection inequality 
\begin{equation}
	\label{ineq_pettyproj}
	\vol(\Pi^\circ L)^{-\frac 1n} \geq \frac{\omega_{n-1}}{\omega_n} \vol(L)^{\frac {n-1}n}.
\end{equation}
Zhang proved in \cite{zhang1999affine} a functional version of \eqref{ineq_pettyproj} that results in a Sharp Affine Sobolev inequality
\begin{equation}
	\label{ineq_zhang_1}
	\left( \frac 1n \int_{S^{n-1}} \left( \frac 12 \int_{\R^n} |\langle \nabla f(x), \xi \rangle| dx \right)^{-n} d\xi \right)^{-\frac 1n}
	\geq
	\frac{\omega_{n-1}}{\omega_n} \|f\|_{\frac n{n-1}}
\end{equation}
where the left-hand side is $\vol(\Pi^\circ K_f)^{-\frac 1n}$ and $K_f$ is such that $S(K_f,\cdot) = S(f,\cdot)$. Again by the Blaschke-Santal\'o inequality applied to $\Pi K_f$ we obtain
\begin{equation}
	\label{ineq_stronger_1}
	\left( \frac 1n \int_{S^{n-1}} \left( \frac 12 \int_{\R^n} |\langle \nabla f(x), \xi \rangle| dx \right)^{-n} d\xi \right)^{-\frac 1n}
	\geq
	\left( \frac 1{\omega_n^2 n!} \manyint{\R^n} D_n(\nabla f(x_1), \ldots, \nabla f(x_n)) \manydx \right)^{\frac 1n},
\end{equation}
meaning that the Sobolev-like inequality \eqref{ineq_sobolevish} for $p=1$ is stronger and directly implies the affine Sobolev inequality of Zhang \eqref{ineq_zhang_1}. One can check that the value of $\constconjrsidf[1]^{1/n}$ in \eqref{ineq_sobolevish} is consistent with inequality \eqref{ineq_zhang_1}.
Notice that any purely analytic proof of \eqref{ineq_stronger_1} must contour (or re-prove) the Blaschke-Santal\'o inequality. In any case, that doesn't seem a trivial thing to do, but it might extend to the $L_p$ case:
\begin{problem}
\begin{equation}
	\label{ineq_stronger_p}
	\left( \int_{S^{n-1}} \left( \int_{\R^n} |\langle \nabla f(x), \xi \rangle|^p dx \right)^{-\frac np} d\xi \right)^{-\frac 1n}
	\geq
	\constzhang \left( \manyint{\R^n} D_n(\nabla f(x_1), \ldots, \nabla f(x_n))^p \manydx \right)^{\frac 1{np}}
\end{equation}
\end{problem}

Lastly, to this moment we don't know whether Problem \ref{conj_baseconjecture} for $p>1$ can be reduced to the particular case where $L_1 = \cdots = L_n = L$.
We believe these problems are worth to consider since they might shed some light into an important open problem that resisted the symmetrization approach.

\bibliographystyle{plain}
\bibliography{ref}
\end{document}